\newtheorem{thm}{Theorem}
\newtheorem{lem}[thm]{Lemma}
\newtheorem{defi}[thm]{Definition}
\newtheorem{prop}[thm]{Proposition}
\newtheorem{coro}[thm]{Corollary}
\newtheorem{conj}{Conjecture}
\title{The Mesyan conjecture: a restatement and a correction}
\author[P. S. Fagundes]{Pedro Souza Fagundes}
\address{IMECC, Universidade Estadual de
		Campinas, Rua S\'ergio Buarque de Holanda, 651, Cidade Universit\'aria ``Zeferino Vaz'', Distr. Bar\~ao Geraldo, Campinas, S\~ao Paulo, Brazil, CEP
		13083-859}\email{pedro.fagundes@ime.unicamp.br}
\author[T.C. de Mello]{Thiago Castilho de Mello}
\address{Instituto de Ci\^encia e Tecnologia, Universidade Federal de S\~ao Paulo, Av. Cesare M. Giulio Lattes, 1201, 12.247-014,	S\~ao Jos\'e dos Campos, SP, Brazil}\email{tcmello@unifesp.br}
\author[P. H. S. dos Santos]{Pedro Henrique da Silva dos Santos}
\address{Instituto de Ci\^encia e Tecnologia, Universidade Federal de S\~ao Paulo, Av. Cesare M. Giulio Lattes, 1201, 12.247-014,	S\~ao Jos\'e dos Campos, SP, Brazil}\email{silva.pedro@unifesp.br}
\dedicatory{Dedicated to Professor Vesselin Drensky on the occasion of his 70th birthday.}
\begin{document}

\maketitle

\begin{abstract}
    The well-known Lvov-Kaplansky conjecture states that the image of a multilinear polynomial $f$ evaluated on $n\times n$ matrices is a vector space. A weaker version of this conjecture, known as the Mesyan conjecture, states that if $m=\deg f$ and $n\geq m-1$ then its image contains the set of trace zero matrices. Such conjecture has been proved for polynomials of degree $m \leq 4$. The proof of the case $m=4$ contains an error in one of the lemmas. In this paper, we correct the proof of such lemma and present some evidences which allow us to state the Mesyan conjecture for the new bound $n \geq \frac{m+1}{2}$, which cannot be improved.
  
\noindent
{\bf AMS subject classification} (2010): 15A54, 16R10, 16S50
\\
{\bf Key words:} Images of polynomials, Lvov-Kaplansky conjecture, Mesyan conjecture, polynomial identities, central polynomials
\end{abstract}

\section{Introduction}

Let $K$ be a field and let $M_n(K)$ denote the algebra of $n\times n$ matrices over $K$. A famous problem known as Lvov-Kaplansky conjecture asserts: the image of a multilinear polynomial (in noncommutative variables) on $M_n(K)$ is a vector space. Such conjecture is equivalent to the following: the image of a multilinear polynomial on $M_n(K)$ is $\{0\}$, $K$ (viewed as the set of scalar matrices), $sl_n(K)$ (the set of traceless matrices) or $M_n(K)$.

Although proving that some subset is a vector space seems to be in a first look a simple problem, a solution to Lvov-Kaplansky conjecture is known only for $n=2$ \cite{K-BMR, Malev2}. The case $n=3$ has interesting progress, but not a solution \cite{K-BMR3}. 
This conjecture motivated other studies related to images of polynomials. For instance, papers on images of polynomials on some subalgebras of $M_n(K)$, images of Lie, and Jordan polynomials on Lie and Jordan algebras have been published since then (see \cite{Lie, Fagundes, FdeM, GdM,  K-BMRLie,MalevQ,MalevJ}). For a nice compilation of results on images of polynomials, we recommend the survey \cite{Survey}.

An analogous of the Lvov-Kaplansky conjecture for the infinite dimensional case, i.e., for the algebra $A=\text{End}(V)$, where $V$ is a countably infinite-dimensional vector space over $K$ was studied in \cite{Vitas1}. In such paper, the author proved that if $f$ is any nonzero multilinear polynomial, than the image of $f$ is $A$.

A weakening of the Lvov-Kaplansky conjecture is the so called Mesyan conjecture \cite[Conjecture 11]{Mesyan}:

\begin{conj}\label{conj0}
    Let $K$ be a field, $n\geq 2$ and $m\geq 1$ be integers and $f(x_1, \dots, x_m)$ a nonzero multilinear polynomial in $K \langle x_1, \dots, x_m \rangle$.  If $n\geq m-1$, then the image of $f$ contains all trace zero matrices.
\end{conj}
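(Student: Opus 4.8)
The plan is to reduce the statement to an explicit combinatorial assertion about the permutation coefficients of $f$, and then settle that assertion by substituting matrix units. After renaming the variables we may assume the coefficient of the monomial $x_1x_2\cdots x_m$ in $f$ equals $1$. Since $f$ is multilinear, $\operatorname{Im}(f)\subseteq M_n(K)$ is stable under scalar multiplication, and since $f(PA_1P^{-1},\dots,PA_mP^{-1})=P\,f(A_1,\dots,A_m)P^{-1}$ it is stable under conjugation by $\mathrm{GL}_n(K)$. Hence it suffices to realise, for every similarity class of trace-zero matrices, one convenient representative — a companion-type matrix for a regular class, a single $e_{i,j}$ with $i\ne j$ together with the diagonal differences $e_{i,i}-e_{j,j}$, and one matrix of each nilpotent Jordan type. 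The workhorse is substitution by matrix units: if $A_i=e_{r_i,c_i}$ then a monomial $x_{\sigma(1)}\cdots x_{\sigma(m)}$ evaluates to $e_{r_{\sigma(1)},\,c_{\sigma(m)}}$ when the chain conditions $c_{\sigma(k)}=r_{\sigma(k+1)}$ hold for $k=1,\dots,m-1$, and to $0$ otherwise. Viewing each $e_{r_i,c_i}$ as a directed edge $r_i\to c_i$ on the vertex set $\{1,\dots,n\}$, the surviving monomials are exactly the Eulerian trails through this edge multiset; choosing the edges to form a single directed cycle, possibly with one pendant self-loop, on at most $n$ vertices kills all but a cyclic-shift class of monomials, so that $f$ evaluates to $\bigl(\sum a_{\sigma}\bigr)e_{a,b}$ for a short, explicit list of permutations $\sigma$.

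Carrying this out, one extracts from such specialisations a small linear system in the $m!$ coefficients $a_\sigma$ of $f$. When the relevant coefficient sums are nonzero, the construction produces the needed representatives directly — a non-nilpotent trace-zero matrix with prescribed spectrum, the units $e_{i,j}$, and the nilpotent Jordan blocks — and conjugation-invariance finishes that branch. The degenerate branch is where \emph{all} these sums vanish; there $f$ is forced into a commutator/central shape, and one argues (i) $f$ cannot be a central polynomial, using that $M_n(K)$ admits no multilinear central polynomial in this degree range (a fact about minimal degrees of central polynomials, with the small cases $n=2,3,4$ checked by hand — this is where $n\ge(m+1)/2$ enters), and (ii) otherwise $f$ contains, or telescopes to, a genuine commutator, whence the classical theorem that $[x_1,x_2]$ has image $sl_n(K)$ together with an induction on $m$ peeling off a commutator does the job. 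The construction needs only about $m/2$ vertices once repeated matrix units are allowed, which is why the honest threshold should be $n\ge(m+1)/2$ rather than $n\ge m-1$; optimality of that threshold is to be witnessed by an explicit multilinear polynomial of degree $m\approx 2n$, obtained by inserting a central polynomial of $M_n(K)$ (of degree of order $n^2$) into a suitable monomial pattern, whose image on $M_n(K)$ misses a trace-zero matrix.

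The main obstacle — and precisely the point at which the published degree-$4$ argument fails — is the degenerate branch: one must show that the coefficient relations forced by \emph{all} admissible matrix-unit specialisations cannot hold simultaneously unless $f$ has the commutator/central form above. For $m=2,3$ this is immediate; for $m=4$ it is a delicate but finite computation (which must be done correctly); and for general $m$ it is a genuine combinatorial identity describing how the Eulerian-trail classes partition the symmetric group $S_m$ and interact with the cyclic group action. I expect no short uniform proof of this, and a clean one would essentially resolve the conjecture; the realistic route is to push the case analysis to $m=5,6,\dots$, isolate exactly which coefficient relations the cycle-plus-loop construction fails to detect, and thereby either clear further degrees or pin down the extremal examples that make $n\ge(m+1)/2$ sharp.
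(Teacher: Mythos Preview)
The statement is Conjecture~\ref{conj0}, which remains open; the paper does not prove it in general and only settles the cases $m\le 4$ (supplying a corrected argument for $m=4$). What you have written is a programme rather than a proof, and you concede as much in your final paragraph: the ``degenerate branch'' --- where every admissible matrix-unit specialisation yields a vanishing coefficient sum --- is precisely the content of the conjecture, and you offer no argument for it beyond small $m$.

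Even as a programme there are real gaps. In the degenerate branch you claim that the vanishing relations force $f$ into ``commutator/central shape'' and that an induction on $m$, ``peeling off a commutator'', then finishes via Shoda/Albert--Muckenhoupt. Neither step works as stated: over characteristic zero \emph{every} proper multilinear polynomial is already a linear combination of products of commutators, so ``commutator shape'' carries no information; and the image of a sum of commutator-products need not contain the image of any single commutator, so there is nothing to peel. This is exactly the difficulty the paper meets already at $m=4$: after reducing to proper polynomials and expanding in a Hall basis, the residual cases are $St_4$ and the one-parameter family $[x_1,x_2][x_1,x_3]+\lambda[x_1,x_3][x_1,x_2]$, each dispatched by explicit, rather delicate constructions in Jordan normal form (Lemmas~\ref{l4}, \ref{l1}, \ref{l2}) --- nothing resembling your Eulerian-trail bookkeeping or an inductive reduction. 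Finally, your sharpness witness is off: the obstruction at $m=2n$ is simply the standard identity $St_{2n}$, whose image is $\{0\}$, not a construction built from a central polynomial of degree of order $n^2$.
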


The above conjecture is based on the following result (see \cite[Proposition 10]{Mesyan}).

\begin{prop}\label{propo}
    Let $K$ be a field, $n\geq 2$ and $m\geq 1$ be integers, and $f(x_1, \dots, x_m)$ be a nonzero multilinear polynomial in $K  \langle x_1, \dots, x_m \rangle$. If $n\geq m - 1$, then the $K$-subspace of $M_n(K)$ generated by the image of $f$ contains $sl_n(K)$.
\end{prop}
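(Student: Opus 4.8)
The plan is to realize the span of the image as a module for the conjugation action and then pin it down. Set $V := \mathrm{span}_K(\mathrm{Im}\, f) \subseteq M_n(K)$. Since conjugation by any $g \in GL_n(K)$ is a $K$-algebra automorphism of $M_n(K)$, we have $g\, f(X_1, \dots, X_m)\, g^{-1} = f(gX_1g^{-1}, \dots, gX_mg^{-1})$ for all $X_i$, so $\mathrm{Im}\, f$, and hence $V$, is invariant under the conjugation action of $GL_n(K)$ on $M_n(K)$. I would then use the fact that the only $GL_n(K)$-submodules of $M_n(K)$ for this action are $\{0\}$, $KI_n$, $sl_n(K)$ and $M_n(K)$: when $\mathrm{char}\,K \nmid n$ one has $M_n(K) = KI_n \oplus sl_n(K)$ with $sl_n(K)$ irreducible, and when $\mathrm{char}\,K \mid n$ these four subspaces form a chain $\{0\} \subset KI_n \subset sl_n(K) \subset M_n(K)$ with $sl_n(K)/KI_n$ irreducible and $KI_n$ not a direct summand of $sl_n(K)$. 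In both cases a submodule that is nonzero and not contained in $KI_n$ must contain $sl_n(K)$. Hence it suffices to show that $f$ is neither a polynomial identity nor a central polynomial of $M_n(K)$.

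That $f$ is not an identity is immediate from the Amitsur--Levitzki theorem: we have $m \leq n+1 \leq 2n-1 < 2n$ (using $n \geq 2$), and $M_n(K)$ satisfies no nonzero polynomial identity of degree less than $2n$; thus the nonzero multilinear polynomial $f$ of degree $m$ does not vanish identically on $M_n(K)$, so $V \neq \{0\}$.

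To see that $f$ is not a central polynomial, suppose to the contrary that $f(X_1,\dots,X_m)$ is scalar for all $X_i \in M_n(K)$. Then the polynomial $h := f x_{m+1} - x_{m+1} f$ vanishes on $M_n(K)$, since the values of $f$ are central; it is multilinear of degree $m+1$; and it is nonzero, because every monomial of $fx_{m+1}$ ends in $x_{m+1}$ while every monomial of $x_{m+1}f$ begins with $x_{m+1}$, and, $h$ being multilinear, no monomial can do both, so no cancellation occurs. Applying Amitsur--Levitzki to $h$ gives $m+1 \geq 2n$, so $m \geq 2n-1$; together with $m \leq n+1$ this forces $n \leq 2$, a contradiction when $n \geq 3$. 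When $n = 2$ the inequality $m \geq 2n-1$ only says $m \geq 3$, so in view of $m \leq n+1 = 3$ only the case $m = 3$ needs attention, and there one invokes the known fact that $M_2(K)$ has no multilinear central polynomial of degree $3$. In all cases $V$ is a nonzero submodule of $M_n(K)$ not contained in $KI_n$, whence $V \supseteq sl_n(K)$, as claimed.

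I expect the main obstacle to be the structural input invoked in the first step — the classification of the conjugation-submodules of $M_n(K)$, equivalently the irreducibility of the adjoint module $sl_n(K)$ when $\mathrm{char}\,K \nmid n$, respectively the irreducibility of $sl_n(K)/KI_n$ together with the non-splitting of $0 \to KI_n \to sl_n(K) \to sl_n(K)/KI_n \to 0$ when $\mathrm{char}\,K \mid n$; this should be either cited or proved by analyzing weight spaces and fixed points of the conjugation action, the only conjugation-fixed matrices being the scalars. A more hands-on alternative bypasses module theory: produce an explicit matrix-unit substitution giving a non-scalar value of $f$ — for example $x_i \mapsto e_{i,i+1}$ isolates the monomial $x_1\cdots x_m$ and yields $f = c\,e_{1,m+1}$ once $n \geq m+1$, with a wrap-around variant for $n \in \{m-1,m\}$ — and then check directly that the conjugates of such a matrix span $sl_n(K)$; the subtle point is arranging the substitution so that the surviving monomials do not sum to a scalar, which is where small characteristic becomes delicate. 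Finally, the argument above uses only that $f$ is neither an identity nor a central polynomial of $M_n(K)$, which is guaranteed as soon as $2n > m$; this is precisely what suggests the sharper bound $n \geq \frac{m+1}{2}$ stated later, the one remaining gap being that when $m = 2n-1$ a degree count no longer excludes $f$ from being a central polynomial.
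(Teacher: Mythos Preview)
Your reduction to ``$f$ is neither an identity nor a central polynomial of $M_n(K)$'' is sound and in fact coincides with the paper's Corollary~\ref{central}; that corollary already disposes of the case $n=2$, $m=3$ you single out, so the separate treatment there is unnecessary. The paper does not itself prove Proposition~\ref{propo} --- it is quoted from Mesyan --- but it proves the sharper Theorem~\ref{newbound}, and the method there is precisely your ``hands-on alternative'': one substitutes a chain of matrix units so that only the monomial $x_1\cdots x_m$ survives and produces a nonzero multiple of $e_{ij}$, whence the span of the image contains all zero-diagonal matrices, and one finishes with the Amitsur--Rowen lemma (Lemma~\ref{lemasimiliar}) on similarity to zero-diagonal form together with Proposition~\ref{properties}(1).

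Your primary route, through the classification of $GL_n(K)$-submodules of $M_n(K)$ under conjugation, is genuinely different from this, but as stated it has a gap over small fields. The assertion that when $\operatorname{char}K\mid n$ the extension $0\to KI_n\to sl_n(K)\to sl_n(K)/KI_n\to 0$ does not split as $GL_n(K)$-modules is false in general: for $K=\mathbb{F}_2$ and $n=2$ the two-dimensional subspace
\[
W=\{\,0,\ e_{12}+e_{21},\ I+e_{12},\ I+e_{21}\,\}\subset sl_2(\mathbb{F}_2)
\]
is $GL_2(\mathbb{F}_2)$-stable (its three nonzero elements all have Jordan form $I+e_{12}$ and hence constitute a single conjugacy class) and is a complement to $KI$, directly contradicting your claim that every nonzero conjugation-invariant subspace not contained in $KI_n$ must contain $sl_n$. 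A natural repair is to invoke in addition that $\operatorname{span}(f(M_n(K)))$ is a Lie ideal (Proposition~\ref{properties}(3)): since $[e_{12}+e_{21},e_{12}]=e_{11}+e_{22}=I\notin W$, this $W$ is not a Lie ideal, and more generally Herstein's dichotomy for Lie ideals of simple rings yields exactly what you need away from the exceptional case of $M_2$ in characteristic~$2$. As written, though, the module-theoretic step does not go through over arbitrary fields, so the argument is incomplete exactly where you anticipated; the paper's matrix-unit computation sidesteps this entirely.
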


In fact, once one assumes the Lvov-Kaplansky conjecture is true, the sentence ``{the image of $f$ on $M_n(K)$ contains $sl_n(K)$}'' is equivalent to ``$f$ is not an identity nor a central polynomial of $M_n(K)$''. On the other hand, it is well-known that $M_n(K)$ has no identities or central polynomials of degree $m\leq n+1$, and this makes Conjecture \ref{conj0} a particular case of the Lvov-Kaplansky conjecture. In particular, a counter-example to Mesyan conjecture is a counter-example for the Lvov-Kaplanksy conjecture.

In this paper, we present a more general result than Proposition \ref{propo}, and restate the conjecture for a more general case (see Conjecture \ref{conjN} below). We also discuss the relation of Conjecture \ref{conjN} and minimal degrees of central polynomials and identities for $M_n(K)$.

Positive solutions for Conjecture \ref{conj0} have been presented for the algebra $M_\infty(K)$, of finitary matrices and for $m\leq 4$. In \cite{Vitas2} the author proved an analogue of the Mesyan conjecture for algebra $M_\infty(K)$, namely, if $K$ is an infinite field and $f$ is a nonzero multilinear polynomial, then the image of $f$ on $M_\infty(K)$ contains $sl_\infty(K)$ (the set of trace zero finitary matrices). 

The case $m=2$ is a direct consequence of results of Shoda \cite{Shoda} (for the characteristic zero case) and by Albert and Muckenhoupt \cite{AM} (for the positive characteristic case) where they prove that any trace zero matrix is given by a commutator of two matrices, while the case $m=3$ was proved by Mesyan himself in \cite{Mesyan}.

The case $m=4$ was presented by Buzinsky and Winstanley in \cite{BW}, but their proof contains a crucial error in one of its lemmas, so the solution is not correct. In this paper, we present a correction for the such lemma, confirming the positive solution of Mesyan conjecture for $m=4$.

\section{Preliminaries}

In this section we define the basic objects and present the basic results necessary to the paper. We start with the definition of a multilinear polynomial. Throughout the paper, unless otherwise stated, $K$ will denote an arbitrary field and all algebras are considered over $K$.

\begin{defi}
    Let $m$ be a positive integer. By $K\langle x_1, \dots, x_m\rangle$ we denote the free associative algebra, freely generated by $\{x_1, \dots, x_m\}$.   The elements of $K\langle x_1, \dots, x_m\rangle$ will be called \emph{polynomials} in the noncommutative variables $x_1, \dots, x_m$. A polynomial $f(x_1, \dots, x_m)$ is said to be \emph{multilinear} if it can be written as
    \[f(x_1,\dots,x_m)=\sum_{\sigma\in S_m}\alpha_{\sigma}x_{\sigma(1)} \cdots x_{\sigma(m)},\]
    where $S_m$ denotes the group of permutations of $\{1,\dots, m\}$ and $\alpha_\sigma \in K$, for $\sigma\in S_m$.     
\end{defi}

For a given $K$-algebra $A$, a polynomial $f(x_1,\dots,x_m)\in K\langle  X_1, \dots, X_m\rangle$ defines a map (also denoted by $f$)
\[\begin{array}{cccc}
		f: & A^m & \longrightarrow & A  \\
		& (a_1,\dots,a_m) & \mapsto & f(a_1,\dots,a_m) \\
	\end{array} \]
The image of such map $f$ is called \emph{the image of the polynomial $f$ on $A$} and will be denoted by $f(A)$.

Some well-known properties of the set $f(A)$ are given below.

\begin{prop}\label{properties}
Let $A$ be an algebra and $f(x_1,\dots,x_m)=\sum_{\sigma\in S_n}\alpha_{\sigma}x_{\sigma(1)}\cdots x_{\sigma(m)}$ be a multilinear polynomial. Then
\begin{enumerate}
    \item $f(A)$ is closed under automorphisms of $A$. In particular, $f(A)$ is closed under conjugation by invertible elements.
    \item $f(A)$ is closed under scalar multiplication.
    \item The linear span of $f(A)$ is a Lie ideal of $A$. 
    \item If $\sum_{\sigma\in S_m}\alpha_{\sigma}\neq 0$ then $f(A)=A$.
\end{enumerate}
\end{prop}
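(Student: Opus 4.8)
The plan is to verify the four assertions one at a time, each reducing to a short computation built on the multilinearity of $f$; there is no serious obstacle here, so the task is mostly to organize the computations cleanly.

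For (1), I would use that an automorphism $\varphi$ of $A$ is $K$-linear and multiplicative, hence commutes with the evaluation of each monomial: $\varphi(a_{\sigma(1)}\cdots a_{\sigma(m)}) = \varphi(a_{\sigma(1)})\cdots\varphi(a_{\sigma(m)})$. Summing against the coefficients $\alpha_\sigma$ gives $\varphi(f(a_1,\dots,a_m)) = f(\varphi(a_1),\dots,\varphi(a_m)) \in f(A)$, and applying the same to $\varphi^{-1}$ yields $\varphi(f(A)) = f(A)$; the ``in particular'' statement is the special case $\varphi(x) = uxu^{-1}$ with $u$ invertible. For (2), I would note that each monomial of $f$ involves the variable $x_1$ exactly once, so substituting $\lambda a_1$ for $a_1$ multiplies every monomial value by $\lambda$; thus $f(\lambda a_1, a_2, \dots, a_m) = \lambda f(a_1,\dots,a_m)$, which shows $\lambda f(A) \subseteq f(A)$.

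The slightly less immediate point is (3). Let $V$ be the $K$-span of $f(A)$; since it is a subspace by construction, it suffices to check $[b,V] \subseteq V$ for every $b \in A$. I would invoke the facts that $\mathrm{ad}_b \colon x \mapsto bx - xb$ is a derivation of $A$ and that any derivation $D$ acts on a multilinear polynomial by the Leibniz-type rule $D\bigl(f(a_1,\dots,a_m)\bigr) = \sum_{i=1}^m f(a_1,\dots,D(a_i),\dots,a_m)$, which one proves by checking it on a single monomial and extending by linearity. Taking $D = \mathrm{ad}_b$ then expresses $[b, f(a_1,\dots,a_m)]$ as a sum of values of $f$, hence as an element of $V$; since $\mathrm{ad}_b$ is $K$-linear and $f(A)$ spans $V$, this gives $[b,V]\subseteq V$, so $V$ is a Lie ideal. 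For (4), I would substitute $x_1 = a$ and $x_2 = \dots = x_m = 1$ (using that $A$ is unital): in each monomial $x_{\sigma(1)}\cdots x_{\sigma(m)}$ all factors but one equal $1$, so the monomial collapses to $a$, whence $f(a,1,\dots,1) = \bigl(\sum_{\sigma\in S_m}\alpha_\sigma\bigr)a$; if $\sum_\sigma\alpha_\sigma\neq 0$ this scalar is invertible in $K$, so these values already exhaust $A$ and $f(A) = A$.

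All four computations are routine. The only step deserving a moment's care is the Leibniz identity for multilinear polynomials used in (3), and the only hypothesis worth flagging is that (4) relies on the presence of a unit in $A$ — harmless in the intended setting $A = M_n(K)$. I do not anticipate any real difficulty.
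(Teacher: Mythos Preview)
Your proof is correct and entirely standard. The paper itself does not prove this proposition at all: it is introduced as ``some well-known properties of the set $f(A)$'' and stated without argument, so there is no approach in the paper to compare against. Your flag that (4) uses a unit in $A$ is appropriate; the paper implicitly works with unital algebras (ultimately $M_n(K)$), so this is not an issue in context.
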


The theory of images of polynomials on algebras has strong connections with the theory of polynomial identities (PI-theory). For instance, a polynomial identity for an algebra $A$ is a polynomial whose image is $\{0\}$ and a central polynomial is a polynomial whose image lies in $Z(A)$, the center of the algebra $A$.
The set of all polynomial identities of an algebra $A$ is an ideal of $K\langle x_1,x_2, \dots\rangle$ which is invariant under endomorphisms of $K\langle x_1,x_2, \dots\rangle$. It is denoted by $T(A)$.

Some techniques from PI-theory are useful in studying images of polynomials on algebras. For instance, when $A=M_n(K)$, the $m$-generated algebra of generic matrices is known to be isomorphic to the quotient algebra \[F_m(M_n(K))=\displaystyle \frac{K\langle x_1, \dots, x_m \rangle}{T(M_n(K))\cap K\langle x_1, \dots, x_m \rangle},\] and working module $T(M_n(K))$ is equivalent to work in the algebra of generic matrices, see for instance \cite[Chapter 7]{Drensky}.

Let us denote by $St_{k}$ the standard polynomial of degre $k$:
\[St_{k}(x_1,\dots, x_{k})=\sum_{\sigma\in S_{k}}(-1)^\sigma x_{\sigma(1)} \cdots x_{\sigma(k)}.\]
The following is a well-known fact about identities in matrices

\begin{thm}[Amitsur-Levitszky]
    The algebra $M_n(K)$ satisfies the polynomial identity $St_{2n}$. Moreover, $M_n(K)$ does not satisfy any polynomial identity of degree less than $2n$ and any polynomial identity of degree $2n$ of $M_n(K)$ is a scalar multiple of $St_{2n}$.
\end{thm}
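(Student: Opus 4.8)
The plan is to prove the three assertions separately. The first — that $St_{2n}$ is a polynomial identity of $M_n(K)$, i.e.\ the Amitsur--Levitzki theorem itself — I would reduce to the characteristic zero case: evaluating $St_{2n}$ on generic $n\times n$ matrices with commuting indeterminate entries produces a matrix whose entries are polynomials with \emph{integer} coefficients in those indeterminates, and such integer polynomials are identically zero if and only if they vanish over $\mathbb C$; being identically zero as formal integer polynomials, they then vanish over every field. So it suffices to prove that $St_{2n}$ is an identity of $M_n(\mathbb C)$, and we may assume $\operatorname{char}K=0$.

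For $\operatorname{char}K=0$ I would run Rosset's Grassmann-algebra argument. Fix $A_1,\dots,A_{2n}\in M_n(K)$, let $E$ be the Grassmann algebra on anticommuting generators $e_1,\dots,e_{2n}$, and set $B=\sum_{i=1}^{2n}e_iA_i\in M_n(E)$. From $e_i^2=0$ and $e_ie_j=-e_je_i$ one computes $B^2=\sum_{i<j}e_ie_j[A_i,A_j]$, whose entries lie in the even part $E_0$, a commutative subalgebra which is central in $E$; thus $C:=B^2$ is a matrix over a commutative ring. The crux is that $\operatorname{tr}(C^k)=\operatorname{tr}(B^{2k})=0$ for all $k\ge1$: relabelling the summation indices by a cyclic shift and using the cyclicity of the trace gives $\operatorname{tr}(B^{2k})=(-1)^{2k-1}\operatorname{tr}(B^{2k})=-\operatorname{tr}(B^{2k})$. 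By Newton's identities (here $\operatorname{char}K=0$ is used) every coefficient of the characteristic polynomial of $C$ over $E_0$ vanishes, so $C^n=0$ by Cayley--Hamilton, i.e.\ $B^{2n}=0$. Expanding $B^{2n}$ and retaining the nonzero terms, which are those indexed by permutations of $\{1,\dots,2n\}$, yields $B^{2n}=(e_1e_2\cdots e_{2n})\,St_{2n}(A_1,\dots,A_{2n})$; since $e_1\cdots e_{2n}\neq 0$ in $E$, we conclude $St_{2n}(A_1,\dots,A_{2n})=0$.

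For the lower bound I would first reduce, by the standard multilinearisation process (see \cite[Chapter 7]{Drensky}; when $K$ is finite, ``identity'' is understood in the multilinear sense here), to ruling out a multilinear identity $f=\sum_{\sigma\in S_d}\alpha_\sigma x_{\sigma(1)}\cdots x_{\sigma(d)}$ with $d\le 2n-1$. After relabelling the variables, assume $\alpha_{\mathrm{id}}\neq 0$, and substitute for $x_1,\dots,x_d$ the ``staircase'' of matrix units $E_{11},E_{12},E_{22},E_{23},E_{33},\dots$ (alternately $E_{kk}$ and $E_{k,k+1}$); since $d\le 2n-1$, all indices stay in $\{1,\dots,n\}$. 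A short combinatorial check shows these units compose along a unique Eulerian trail, namely the one in the identity order, so $x_1x_2\cdots x_d$ evaluates to a single nonzero matrix unit while every other ordering evaluates to $0$; hence the substitution sends $f$ to $\alpha_{\mathrm{id}}$ times a nonzero matrix unit, a contradiction. Together with the first part, the minimal degree of an identity of $M_n(K)$ is exactly $2n$.

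For uniqueness in degree $2n$ I would again reduce to a multilinear identity $f=\sum_{\sigma\in S_{2n}}\alpha_\sigma x_{\sigma(1)}\cdots x_{\sigma(2n)}$ and show $\alpha_\sigma=(-1)^\sigma\alpha_{\mathrm{id}}$, i.e.\ $f=\alpha_{\mathrm{id}}St_{2n}$. This should follow from a refinement of the matrix-unit technique: for each pair of adjacent positions one sets up a substitution of matrix units — now including a ``closing'' unit such as $E_{n1}$, so that admissible products trace Eulerian \emph{circuits} rather than trails — under which exactly the two monomials differing by the transposition of those positions survive; comparing their contributions forces the two coefficients to be negatives of one another, and since adjacent transpositions generate $S_{2n}$ this determines every $\alpha_\sigma$ from $\alpha_{\mathrm{id}}$ with the sign of $\sigma$. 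I expect the real difficulty to lie, conceptually, in the identity $\operatorname{tr}(B^{2k})=0$ of the second paragraph — this is precisely what forces $C$ to be nilpotent, and without it the Grassmann trick goes nowhere — and, computationally, in this last step: designing, for each adjacent transposition, a matrix-unit substitution that isolates exactly the two intended monomials (and verifying the Eulerian-circuit bookkeeping so that no third monomial contributes) is fiddly, though it uses no idea beyond the staircase substitution.
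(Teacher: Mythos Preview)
The paper does not prove this theorem at all; it is quoted as the classical Amitsur--Levitzki theorem and used as a black box (its only role is to feed Corollary~\ref{central}). There is therefore no proof in the paper to compare your proposal against.

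For what it is worth, your sketch is the standard modern treatment and is correct in outline: Rosset's Grassmann-algebra argument (after the reduction to $\operatorname{char}K=0$ via generic matrices with integer-coefficient entries) for the vanishing of $St_{2n}$; the staircase substitution $E_{11},E_{12},E_{22},E_{23},\dots$ for the minimal-degree bound; and the Eulerian-circuit refinement with a closing unit $E_{n1}$ for uniqueness in degree $2n$. Two small remarks. First, your reduction to multilinear identities in the last two parts is only automatic over infinite fields; your parenthetical flags this but does not dispose of it. The clean fix is to observe that a \emph{multilinear} polynomial is an identity of $M_n(K)$ iff it is one of $M_n(\overline K)$ (since $M_n(\overline K)$ is $\overline K$-spanned by $M_n(K)$), prove the multilinear statements over $\overline K$, and note that any nonzero identity of degree $d$ over an infinite field produces, by multilinearisation, a nonzero multilinear identity of degree $\le d$ --- so the non-multilinear case over finite $K$ reduces to the multilinear case over $\overline K$. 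Second, in the uniqueness step the ``exactly two monomials survive'' claim needs care: with the full staircase plus $E_{n1}$ the surviving orderings are the cyclic shifts, not just two, so one typically reads off the relation $\alpha_{\sigma}=-\alpha_{\sigma\tau}$ either by looking at a single fixed matrix-unit entry of the evaluation (which kills all but two cyclic shifts) or by using a slightly different substitution tailored to the chosen transposition. This is exactly the ``fiddly'' bookkeeping you anticipate, not a conceptual gap.
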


\begin{coro}\label{central}
    The algebra $M_n(K)$ does not have central polynomials of degree less than $2n$.
\end{coro}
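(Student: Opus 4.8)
The plan is to reduce the statement to the Amitsur--Levitzki theorem via the standard device of commuting a central polynomial against a fresh variable. Suppose $f=f(x_1,\dots,x_m)$ is a central polynomial of $M_n(K)$ of positive degree $d$; the goal is to show $d\ge 2n$. Introduce a new variable $x_0$ and put $g=g(x_0,x_1,\dots,x_m)=fx_0-x_0f\in K\langle x_0,x_1,\dots,x_m\rangle$. Since $f$ takes scalar values on $M_n(K)$ and scalar matrices are central, $g$ vanishes under every substitution of matrices, so $g$ is a polynomial identity of $M_n(K)$ of degree $d+1$. This turns the problem into a lower bound for $\deg g$.

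The next step is to record two features of $g$ as an element of the free algebra. First, $g\ne 0$: because $x_0$ does not occur in $f$, every monomial of $fx_0$ ends with $x_0$ and contains it nowhere else, while every monomial of $x_0f$ begins with $x_0$ and contains it nowhere else; hence $fx_0$ and $x_0f$ have no monomial in common, and as $f\ne 0$ the difference is nonzero. Second, and for the same reason, in \emph{every} monomial of $g$ the variable $x_0$ occupies the first or the last position. Applying the Amitsur--Levitzki theorem to the nonzero identity $g$ gives $d+1=\deg g\ge 2n$, that is, $d\ge 2n-1$.

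It then remains to exclude $d=2n-1$. In that case $g$ is a nonzero polynomial identity of $M_n(K)$ of degree exactly $2n$, so by the second assertion of the Amitsur--Levitzki theorem $g=\lambda\,St_{2n}$ for some nonzero scalar $\lambda$ (after identifying the variables occurring in $g$ with those of the standard polynomial). But $St_{2n}$ is multilinear and, since $n\ge 2$ forces $2n\ge 4$, for each of its variables $y$ and each interior position $p\in\{2,\dots,2n-1\}$ one can pick a permutation $\sigma$ with $\sigma(p)$ equal to the index of $y$; the corresponding monomial appears in $St_{2n}$---hence in $g$---with coefficient $\pm\lambda\ne 0$ and has $y$ in position $p$. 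Taking $y=x_0$ contradicts the second feature of $g$ found above. Hence $d=2n-1$ is impossible and $d\ge 2n$; together with the fact that a central polynomial is by definition not a polynomial identity and that $M_n(K)$ has no nonzero identity of degree below $2n$, this yields the corollary.

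I expect the only genuine subtlety to be the passage from the bound $d\ge 2n-1$, which the commutator device gives at once, to the sharp bound $d\ge 2n$: this is exactly where the ``moreover'' clause of Amitsur--Levitzki combines with the rigid position of $x_0$ inside $g$. A minor technical point is that the cited form of Amitsur--Levitzki is cleanest for multilinear identities; if one prefers to use it in that form, one first replaces $g$ by its complete multilinearization, which is again a nonzero identity of the same degree (complete multilinearization never annihilates a nonzero polynomial) and in which $x_0$, already linear in $g$, still occurs only at the two ends of each monomial, so the same contradiction applies.
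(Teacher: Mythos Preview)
Your proof is correct and follows essentially the same route as the paper's: form the commutator $g=[f,x_0]$, apply the degree bound from Amitsur--Levitzki to get $d\ge 2n-1$, and then kill the boundary case $d=2n-1$ using the uniqueness clause. The only cosmetic difference is in that last step: the paper observes that a multilinear $g$ has at most $2(2n-1)!$ monomials while $St_{2n}$ has $(2n)!$, whereas you argue via the position of $x_0$; these are two phrasings of the same obstruction, and your additional remarks on $g\ne 0$ and on multilinearizing $g$ (rather than $f$) when $f$ is not assumed multilinear make the argument slightly more self-contained than the paper's.
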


\begin{proof}
Assume $f(x_1,\dots,x_m)$ is a multilinear central polynomial for $M_n(K)$. Then the commutator $g=[f(x_1,\dots,x_m),x_{m+1}]$ is a polynomial identity for $M_n(K)$. As a consequence $m+1\geq 2n$, which means $m\geq 2n-1$. If $m=2n-1$ then $g$ is a polynomial identity of degree $2n$, and it must be a scalar multiple of $St_{2n}$, but writing $g$ as a sum of nonzero monomials gives us at most $2(2n-1)!$ summands, while in $St_{2n}$ we have $(2n)!$ summands. A contradiction. Hence $m\geq 2n$.
\end{proof}

\section{A new bound for the Mesyan conjecture}

The main goal of this section is to present evidences which will allow us to state the Mesyan conjecture in a more general setting. 

Let us assume for a moment that the Lvov-Kaplansky conjecture is true. If $f$ is a polynomial of degree $m \leq 2n-1 $ then by Corollary \ref{central}, $f$ is not a central polynomial nor an identity for $M_n(K)$ and by our assumption  $f(M_n(K))$ is $sl_n(K)$ or $M_n(K)$ which, in both cases, contains $sl_n(K)$.

The above fact suggest that the Mesyan conjecture should be stated in a more general setting, namely for $n\geq \frac{m+1}{2}$. Also, this bound cannot be improved once $St_{2n}$ is a polynomial identity of degree $2n$ for $M_n(K)$.

We now present one more evidence that the conjecture should be stated in this setting. We will prove a more general version of Proposition \ref{propo}:

        \begin{thm}\label{newbound}
            Let $K$ be a field, $n \geq 2$ and $m \geq 2$ positive integers such that $char(K)$ does not divide $n$ and let $f(x_1, \cdots , x_m)$ be a non-zero multilinear polynomial in $K \langle x_1, x_2, \ldots, x_m \rangle$. If $n \geq \frac{m+ 1}{2}$ , then the $K$-subspace span$(f(M_n(K))$ contains $sl_n(K)$. 
        \end{thm}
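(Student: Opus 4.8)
The plan is to follow the strategy of Mesyan's original proof of Proposition \ref{propo}, but to exploit the sharper information that is available when $n \geq \frac{m+1}{2}$ rather than only $n \geq m-1$. First I would recall the key reduction already implicit in the excerpt: by Proposition \ref{properties}(3), the linear span $U = \operatorname{span}(f(M_n(K)))$ is a Lie ideal of $M_n(K)$, and since $\operatorname{char}(K) \nmid n$ we have $M_n(K) = sl_n(K) \oplus K\cdot I$ as Lie algebras, with $sl_n(K)$ simple as a Lie algebra (this is where the hypothesis on the characteristic is used). A standard fact about Lie ideals of $M_n(K)$ then gives that either $U \subseteq Z(M_n(K)) = K\cdot I$, or $U \supseteq sl_n(K)$. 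So the whole theorem reduces to showing that $f$ is \emph{not} a central polynomial (equivalently, $U \not\subseteq K\cdot I$) and not a polynomial identity for $M_n(K)$.

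Next I would invoke Corollary \ref{central} together with the Amitsur--Levitzki theorem: since $m = \deg f$ and $n \geq \frac{m+1}{2}$, i.e. $m \leq 2n-1 < 2n$, the polynomial $f$ cannot be a polynomial identity for $M_n(K)$ (its degree is below the minimal identity degree $2n$), and it cannot be a central polynomial for $M_n(K)$ (the minimal degree of a central polynomial is $2n$ by Corollary \ref{central}, and $m \leq 2n-1 < 2n$). Hence $U$ is a Lie ideal of $M_n(K)$ that is neither contained in the center nor zero, so by the dichotomy above $U \supseteq sl_n(K)$, which is exactly the claim.

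The main point to nail down carefully — and the step I expect to be the only real obstacle — is the Lie-ideal dichotomy: I need the precise statement that a $K$-subspace $U$ of $M_n(K)$ with $[U, M_n(K)] \subseteq U$ satisfies either $U \subseteq K\cdot I$ or $sl_n(K) \subseteq U$, valid over an arbitrary field $K$ with $\operatorname{char}(K) \nmid n$. When $n$ is invertible this follows from the fact that $[M_n(K), M_n(K)] = sl_n(K)$ and that $sl_n(K)$ is a simple Lie algebra, but one must be slightly careful in small characteristic (e.g. avoiding the genuinely exceptional pair $n=2$, $\operatorname{char}(K)=2$, which is excluded here since $\operatorname{char}(K) \nmid n$). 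I would cite the classical description of Lie ideals of simple associative algebras (Herstein) for this, and check the low-rank cases by hand if needed. Everything else is a direct bookkeeping argument combining Proposition \ref{properties}, Corollary \ref{central}, and Amitsur--Levitzki, with no computation beyond comparing the integers $m$ and $2n$.
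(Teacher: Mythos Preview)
Your proposal is correct but follows a genuinely different route from the paper. The paper's proof is constructive: for each pair $i\neq j$ it writes down an explicit tuple of matrix units (a ``path'' $e_{ii}, e_{ij}, e_{jl_1}, e_{l_1l_1}, e_{l_1l_2},\dots$ using roughly $m/2$ distinct indices, which is exactly where the hypothesis $n\geq (m+1)/2$ is used) on which only the monomial $x_1\cdots x_m$ survives, so that $f$ evaluates to $\alpha_{(1)}e_{ij}$; this places every off-diagonal matrix unit in $\operatorname{span}(f(M_n(K)))$, and then Lemma~\ref{lemasimiliar} together with conjugation-invariance finishes the argument. Your approach is structural rather than constructive: you use that the span is a Lie ideal (Proposition~\ref{properties}(3)), invoke the Herstein-type dichotomy for Lie ideals of $M_n(K)$ (equivalently, the simplicity of $sl_n(K)$ when $\operatorname{char}(K)\nmid n$), and thereby reduce the whole theorem to the statement that $f$ is neither an identity nor central for $M_n(K)$, which is precisely Amitsur--Levitzki plus Corollary~\ref{central}. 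Your route is shorter and makes it transparent why $m\leq 2n-1$ is the right bound --- it is exactly the threshold below which no identities or central polynomials exist --- but it imports the Lie-ideal classification (and you are right that the delicate point is checking this dichotomy over arbitrary fields with $\operatorname{char}(K)\nmid n$, including characteristic $2$ with $n$ odd; this does follow from simplicity of $sl_n(K)$ under that hypothesis). The paper's route is more self-contained, avoiding the Lie-ideal machinery altogether, and has the side benefit of producing explicit matrices in the image.
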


Recall that for polynomials of degree $m=2$, the Lvov-Kaplanksy conjecture is a consequence of Proposition \ref{properties} (4) and results of Shoda and Albert and Muckenhoupt. Also, for $m\geq 3$ we have $\frac{m+1}{2}\leq m-1$, which shows that the above is a generalization of Proposition \ref{propo}.

Before proving the above theorem, we must first recall the following technical lemma from Amitsur and Rowen (\cite{AR}, Proposition 1.8).

\begin{lem} \label{lemasimiliar}
Let $D$ be a division ring, $n \geq 2$ an integer, and $A \in M_n(D)$ noncentral 
matrix. Then, $A$ is similar to a matrix in $M_n(D)$ with at most one non-zero entry on
the main diagonal. In particular, if $A$ has trace zero, then it is similar to a matrix in $M_n(D)$
with only zeros on the main diagonal.
\end{lem}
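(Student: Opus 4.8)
The plan is to establish the first assertion by induction on $n$, with the engine being the following elementary observation: if $A\in M_n(D)$ is non-central, then there is a column vector $v\in D^n$ such that $v$ and $Av$ are linearly independent over $D$. Indeed, if $Av$ belonged to the $D$-span of $v$ for every $v$, a short linear-algebra argument (comparing $A$ on $v$, $w$ and $v+w$ for suitable $v,w$) would force $A=\lambda I$ with $\lambda\in Z(D)$, contrary to hypothesis. It is worth noting that this already covers the initially deceptive case $A=\lambda I$ with $\lambda\in D\setminus Z(D)$: such an $A$ \emph{is} non-central, and $v=e_1+e_2 f$, with $f$ not commuting with $\lambda$, witnesses the required independence.

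Granted such a $v$, set $v_1=v$, $v_2=Av$ and complete to a basis of $D^n$; relative to it the matrix of $A$ has first column $(0,1,0,\dots,0)$, so, after replacing $A$ by a similar matrix, we may assume that the $(1,1)$-entry of $A$ is $0$. Let $B\in M_{n-1}(D)$ be the block of $A$ on rows and columns $2,\dots,n$. If $n=2$ there is nothing left to prove, since the diagonal of $A$ is then $(0,\ast)$. If $n\geq 3$ and $B$ is non-central, the inductive hypothesis provides $P\in GL_{n-1}(D)$ such that $PBP^{-1}$ has at most one non-zero diagonal entry; conjugating $A$ by $\mathrm{diag}(1,P)$ keeps the $(1,1)$-entry equal to $0$ and replaces the diagonal of $B$ by that of $PBP^{-1}$, so $A$ becomes similar to a matrix with at most one non-zero diagonal entry. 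The only case left is $B=\mu I_{n-1}$ with $\mu\in Z(D)$; here a preliminary conjugation of $A$ by $I_n+E_{13}$ injects a unit into an off-diagonal entry of the $B$-block without altering the $(1,1)$-entry, so $B$ becomes non-scalar -- hence non-central -- and we conclude as before. This proves the first assertion.

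For the ``in particular'' part one runs the same recursion while keeping track of the trace. The subtlety is that over a non-commutative $D$ the naive sum of diagonal entries is not a similarity invariant of $M_n(D)$ -- only its class modulo the additive commutators of $D$ is -- so ``trace zero'' has to be read as \emph{reduced trace} zero; with that reading, the reduced trace is preserved along the recursion and one is left at the bottom with a single diagonal entry $d\in D$ of reduced trace $0$. When $D$ is a field this forces $d=0$ and the proof is complete. In general $d$ still has to be absorbed, and this is where the real work lies: one uses that a reduced-trace-zero element of $D$ is an additive combination of commutators of $D$ in order to bring $\mathrm{diag}(d,0,\dots,0)$, up to similarity, to a matrix with zero diagonal. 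I expect the two reduction steps above to be routine bookkeeping, and this last non-commutative absorption step -- the heart of the Amitsur--Rowen argument -- to be the main obstacle.
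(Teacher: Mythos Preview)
The paper does not supply its own proof of this lemma; it is quoted from Amitsur--Rowen \cite{AR}, Proposition~1.8, so there is no in-house argument to compare against. That said, your sketch is essentially a reconstruction of the Amitsur--Rowen proof.

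Your inductive argument for the first assertion is correct. The existence of $v$ with $v,Av$ right-$D$-independent is exactly the right starting point (and you correctly handle the trap $A=\lambda I$ with $\lambda\notin Z(D)$); the $\operatorname{diag}(1,P)$ conjugation preserves the zero in position $(1,1)$; and the $I+E_{13}$ trick in the residual case $B=\mu I_{n-1}$ works as you describe, producing a non-scalar lower block (with a $-1$ in position $(2,3)$ of $A$) while keeping the $(1,1)$-entry equal to $0$.

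For the ``in particular'' part you are honest about the situation: over a field the single surviving diagonal entry equals the trace and hence vanishes, so the proof is complete; over a non-commutative $D$ one must interpret ``trace zero'' as reduced trace zero and then absorb an element of $[D,D]$ by a further similarity, which is indeed the substantive content of \cite{AR} that you flag but do not carry out. Since the present paper only invokes the lemma with $D=K$ a (commutative) field, your argument already covers every application made here; the general division-ring statement, however, remains a citation rather than a proof in your write-up.
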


	We are now ready to present a proof of Theorem  \ref{newbound}.

\begin{proof}(of Theorem \ref{newbound})
Let \[ f(x_1, \cdots , x_m) = \displaystyle\sum_{\sigma \in S_m} \alpha_\sigma x_{\sigma(1)} \cdots x_{\sigma(m)}.\]

We first recall that if $m=2$ and $f$ is a nonzero polynomial, the image of $f$ is $sl_n(K)$ or $M_n(K)$. Therefore, we can assume $m \geq 3$. 

Without loss of generality we may assume that $\alpha_{(1)} \neq 0$. 
Let  $i, j \; \in \{1, \dots , n \}$ such that $i\neq j$. We will consider two cases: when $m$ is even and when $m$ is odd.
 
 \begin{enumerate}
 \item[\textbf{Case 1:}] $m$ is even. 
 
 Let $m = 2k$, with $k$ integer and $k \geq 2$, then $n \geq k + \frac{1}{2 }$. Since $k$ and $n$ are  integers, then $n \geq k + 1$, therefore $n - 2 \geq k - 1$. So let $l_1,\dots , l_{k-1}$ be  $k - 1$ distinct elements in $ \{1, \dots ,n \} - \{i,j \}$. Then we have
\[f(e_{ii}, e_{ij}, e_{jl_1}, e_{l_1l_1}, e_{l_1l_2}, e_{l_2l_2}, \dots , e_{l_{k-2}l_{k-1}}, e_{l_{k-1}j}) = \alpha_{(1)} e_{ij}.\]

Since $\alpha_{(1)} \neq 0$ and $e_{ij}$ with $i, j$ distinct is a matrix whose diagonal contains only zeros, we have $span(f(M_n(K))$ contains all matrices with zeros on the main diagonal.

Let now $A\in M_n(K)$ be a nonzero trace zero matrix, we must show that $A \in span(f(M_n(K))$. Since char$(K)$ does not divide $n$ then $A$ is a non-central matrix,  hence by Lemma \ref{lemasimiliar} there exists $B \in span(f(M_n(K))$ such that $A = PBP^{-1}$, for some invertible matrix $P\in M_n(K)$ and by Proposition \ref{properties} (1) we conclude that $A \in span(f(M_n(K))$. 

\item[\textbf{Case 2:}] $m$ is odd.

Let $m = 2k -1$, where $k$ is an integer, such that $k \geq 2$. Since $n \geq \frac{m +1}{2}$ then $n-2 \geq k-2$. Therefore, we can find  $k- 2$ distinct elements in $ \{1, \cdots , n \} - \{ i, j\}$, which we will denote by $l_1, \cdots , l_{k-2}$. Hence,
$$f(e_{ii}, e_{ij}, e_{jl_1}, e_{l_1l_1}, e_{l_1l_2}, e_{l_2l_2} \cdots , e_{l_{k-3}l_{k-2}}, e_{l_{k-2}l_{k-2}}, e_{l_{k-2}j}) = \alpha_{(1)} e_{ij}.   $$

As in the previous case, if $A \in sl_n(K)$ then $A \in span(f(M_n(K))$.
 \end{enumerate}
\end{proof}

Now we restate Mesyan conjecture in light of Theorem \ref{newbound} and of the discussion of the beginning of this section.

\begin{conj}[Mesyan conjecture restated]\label{conjN} 
Let $K$ be a field, $n \geq  2$ and $m \geq 1$ be integers, and let $f(x_1, \ldots , x_m)$ be a
non-zero multilinear polynomial in $K \langle x_1, \ldots , x_m \rangle$. If $m \leq  2n -1$, then $sl_n (K) \subseteq f(M_n(K))$. 
\end{conj}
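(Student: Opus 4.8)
The plan is to reduce Conjecture \ref{conjN} to an additive-closure statement about the image, and then attack that statement either structurally or degree by degree. First I would discard the degenerate inputs: by the Amitsur--Levitzki theorem and Corollary \ref{central}, a nonzero multilinear $f$ with $m \le 2n-1$ is neither a polynomial identity nor a central polynomial of $M_n(K)$, so $f(M_n(K))$ is nonzero and not contained in the scalars. Combining this with Proposition \ref{properties}(3), the linear span of $f(M_n(K))$ is a nonzero noncentral Lie ideal of $M_n(K)$, hence contains $sl_n(K)$ (when $char(K) \nmid n$ this is Theorem \ref{newbound}, and the modular case is handled separately). Thus the genuine content of the conjecture is the passage from ``$span(f(M_n(K))) \supseteq sl_n(K)$'' to ``$f(M_n(K)) \supseteq sl_n(K)$''. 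For $n=2$, where the constraint reads $m \le 3$, the conclusion already follows from the solution of the Lvov--Kaplansky conjecture for $2\times 2$ matrices, so the interesting range is $n \ge 3$.

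Second, I would exploit the conjugation-invariance of $f(M_n(K))$ from Proposition \ref{properties}(1) together with the similarity reduction of Lemma \ref{lemasimiliar}: since every trace-zero matrix is conjugate to one with zero diagonal, it suffices to show that every zero-diagonal matrix lies in $f(M_n(K))$. By multilinearity and closure under scalars one can, as in the proof of Theorem \ref{newbound}, substitute matrix units to realize $\alpha_{(1)} e_{ij}$, and hence every scalar multiple of every $e_{ij}$ with $i \neq j$, as a value of $f$ — the bound $n \ge (m+1)/2$ guaranteeing enough distinct indices for such a substitution. The remaining, and genuinely hard, step will be to assemble an arbitrary sum $\sum_{i \neq j} \beta_{ij} e_{ij}$ out of such individual values; for degree $m \le 4$ this is carried out by a direct analysis, block-decomposing the target $A \in sl_n(K)$ and solving the resulting low-degree system of matrix-unit substitutions. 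It is exactly the lemma underlying this last step for $m=4$ that contains the error and that the present paper must repair.

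The main obstacle — and the reason the statement is a conjecture rather than a theorem for $m \ge 5$ — is precisely that ``$span = sl_n(K)$'' does not formally imply ``image $= sl_n(K)$'': a priori $f(M_n(K))$ is only a conjugation-closed, scalar-closed subset whose linear span is $sl_n(K)$, with no built-in additive closure, and this is the same difficulty that already makes the Lvov--Kaplansky conjecture open for $n = 3$. I would therefore expect a general proof to require either a new structural description of conjugation-invariant, scalar-closed subsets of $sl_n(K)$, or an induction on $m$ that genuinely leverages the sharp inequality $m \le 2n-1$; the sharpness of that inequality is witnessed by $St_{2n}$, which has degree $2n$ and image $\{0\}$, so no slack in the bound can be hoped for, and any successful argument must be tight in $m$ against $2n$.
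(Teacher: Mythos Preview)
The statement you address is a \emph{conjecture}: the paper does not prove it, and your proposal rightly presents a strategic outline rather than a complete argument. Your reductions --- ruling out identities and central polynomials via Amitsur--Levitzki and Corollary~\ref{central}, obtaining $sl_n(K)\subseteq\mathrm{span}(f(M_n(K)))$ from Proposition~\ref{properties}(3) or Theorem~\ref{newbound}, and using Lemma~\ref{lemasimiliar} with Proposition~\ref{properties}(1) to reduce to zero-diagonal targets --- are precisely the scaffolding the paper sets up. You also locate the genuine obstruction correctly: the passage from ``span contains $sl_n(K)$'' to ``image contains $sl_n(K)$'' is the open content, with no uniform mechanism known for $m\ge 5$.

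Where the paper goes beyond your outline is only for $m=4$, and there it does \emph{not} proceed by assembling $\sum_{i\neq j}\beta_{ij}e_{ij}$ from matrix-unit values as you sketch. Instead it reduces $f$ to a proper polynomial, expands it in a basis of commutator products, disposes of the cases where a long-commutator term survives by the diagonal trick of Lemma~\ref{l5}, and then reduces the remaining cases to showing that every traceless matrix in Jordan form lies in the image of $[x_1,x_2][x_1,x_3]+\lambda[x_1,x_3][x_1,x_2]$. The corrected Lemma~\ref{l2} handles this last step by an explicit case analysis on the Jordan block structure of the target, solving scalar systems for the diagonal entries of $[A,B]$ and $[A,C]$. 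So your high-level diagnosis is sound and matches the paper's framing, but the actual $m=4$ argument the paper supplies is considerably more computational and follows a different route than the matrix-unit assembly you propose.
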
 

\section{The Mesyan conjecture for polynomials of degree 4}

In this last section our main goal is to discuss the following result given in \cite{BW} and also to give a correction of a particular lemma used in its proof.

\begin{thm}\label{t1}
Let $n\geq 3$ and let $K$ be an algebraically closed field of characteristic zero. Then the image of a nonzero multilinear polynomial $f(x_{1},x_{2},x_{3},x_{4})$ on the matrix algebra $M_{n}(K)$ contains $sl_{n}(K)$. 
\end{thm}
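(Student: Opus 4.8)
The plan is to run the standard reductions, use Theorem \ref{newbound} to obtain the span of the image for free, and then concentrate all the work on upgrading ``$\operatorname{span}(f(M_n(K)))\supseteq sl_n(K)$'' to ``$f(M_n(K))\supseteq sl_n(K)$'' --- which is exactly the content of the lemma of \cite{BW} that must be repaired. Writing $f=\sum_{\sigma\in S_4}\alpha_\sigma x_{\sigma(1)}\cdots x_{\sigma(4)}$, if $\sum_\sigma\alpha_\sigma\neq 0$ we are done by Proposition \ref{properties}(4), so we assume $\sum_\sigma\alpha_\sigma=0$, and after relabelling the variables and rescaling we assume $\alpha_{(1)}=1$. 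Since $\deg f=4<2n$ for $n\geq 3$, Corollary \ref{central} and the Amitsur--Levitzki theorem show that $f$ is neither an identity nor a central polynomial of $M_n(K)$; and since $n\geq 3\geq\frac{4+1}{2}$ and $\operatorname{char}(K)=0$, Theorem \ref{newbound} gives $\operatorname{span}(f(M_n(K)))\supseteq sl_n(K)$. Finally, since $\operatorname{char}(K)=0$ every trace-zero matrix is noncentral, so by Lemma \ref{lemasimiliar} it is similar to a matrix with zero main diagonal; as $f(M_n(K))$ is conjugation-invariant (Proposition \ref{properties}(1)), it therefore suffices to prove that $f(M_n(K))$ contains every $n\times n$ matrix with zero main diagonal.

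The easy half is realising the off-diagonal matrix units exactly: arguing as in the proof of Theorem \ref{newbound}, for $n\geq 3$ one picks $l\notin\{i,j\}$ and checks that $f(e_{ii},e_{ij},e_{jl},e_{lj})=\alpha_{(1)}e_{ij}=e_{ij}$, all other monomials of $f$ vanishing on this tuple because no consistent directed path of indices exists; closing under scaling (Proposition \ref{properties}(2)) and conjugation then puts every matrix of rank at most $1$ that is nilpotent into $f(M_n(K))$. It remains to produce an arbitrary zero-diagonal matrix. Using conjugation-invariance again I would reduce to realising one representative of each conjugacy class of trace-zero matrices, organised by Jordan/rational-canonical type, and split into two regimes: (i) the nilpotent classes, which I would reach by lengthening the matrix-unit substitution above so that the value is a prescribed sum of superdiagonal units matching the desired nilpotent Jordan form; and (ii) the classes having at least two distinct eigenvalues, which one would like to reach by feeding $f$ block-diagonal arguments $\operatorname{diag}(a_t',a_t'')$, so that $f$ takes block-diagonal values $\operatorname{diag}(B,C)$ and the problem splits over the smaller algebras $M_k(K)$ and $M_{n-k}(K)$.

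The main obstacle --- and precisely where \cite{BW} fails --- is regime (ii). A block splitting of a trace-zero matrix generally forces the blocks $B,C$ to have prescribed but \emph{nonzero} traces (for instance, a matrix conjugate to $\operatorname{diag}(2,-1,\dots,-1)$ admits no splitting into two trace-zero blocks when $n\geq 3$), and for such $B$ the inductive hypothesis, which only controls $sl_k(K)$, is not enough; so a naive recursion does not close. The corrected lemma must therefore either (a) bypass the block reduction and realise each zero-diagonal conjugacy-class representative by an explicit substitution, checked to be valid for all $n\geq 3$ and to produce the target with no spurious off-block contributions, or (b) retain the block reduction but dispatch the non-trace-zero block by a separate direct construction rather than by recursion, keeping full track of traces. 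I expect that formulating and proving this lemma correctly is essentially the entire difficulty; once it is available, the theorem follows by combining it with the reductions above and, if option (b) is used, the already-settled small cases ($n=2$ via Lvov--Kaplansky, $M_1(K)=K$ by inspection).
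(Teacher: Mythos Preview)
Your proposal is not a proof but an outline that explicitly leaves the main difficulty unresolved (you write ``I expect that formulating and proving this lemma correctly is essentially the entire difficulty''), and the diagnosis of \emph{where} \cite{BW} goes wrong is off target. The error in \cite{BW} is not a failed block-diagonal recursion. Their argument first reduces the general degree-four multilinear $f$---via specialising a variable to $1$ and using the theory of proper polynomials---to one of a small list of concrete polynomials: a scalar multiple of $St_4$, a polynomial whose Hall-basis expansion has a nonzero left-normed commutator term, or the polynomial $g_\lambda(x_1,x_2,x_3)=[x_1,x_2][x_1,x_3]+\lambda[x_1,x_3][x_1,x_2]$. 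The first two are handled directly, and the faulty step lies solely in the treatment of $g_\lambda$ for $\lambda\neq -1$: \cite{BW} fix $A=\sum e_{i,i+1}$, use Lemma \ref{l3} to prescribe $[A,B]$ and $[A,C]$, and then claim a certain linear system in the diagonal entries has a trace-zero solution; that algebraic claim is simply false. The repair is a more careful (and rather delicate) solution of the resulting nonlinear system case-by-case over the Jordan block structure of $D$.

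Consequently, the two ingredients your outline relies on---the matrix-unit evaluation $f(e_{ii},e_{ij},e_{jl},e_{lj})=\alpha_{(1)}e_{ij}$ and a putative block-splitting---do not lead anywhere near a proof. Realising each $e_{ij}$ only gives you nilpotent rank-one matrices and their conjugates; it says nothing about a generic zero-diagonal matrix since $f(M_n(K))$ is not known to be closed under addition. And the block-reduction scheme fails not merely because of trace bookkeeping but because there is no reason a \emph{general} degree-four $f$ respects block-diagonal inputs in a way that decouples the problem; the paper's route avoids this entirely by first reducing $f$ to the specific $g_\lambda$, for which the explicit choice $A=\sum e_{i,i+1}$ makes the problem an equation in scalars rather than in matrices. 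To move forward you need that reduction step (proper polynomials plus Hall basis) and then a corrected solution of the scalar system for $g_\lambda$.
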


For the sake of completeness we will present the proof of Theorem \ref{t1} in next. However some preliminaries lemmas will be required and we will present them in the following without their proofs. The first one is given in \cite[Lemma 6]{BW}.

\begin{lem}\label{l4}
Let $n\geq 3$ be an integer, let $K$ be a field of characteristic zero and let $a_{i,j}\in K$ such that $\sum_{i=1}^{n}a_{i,i}=0$. Then there exist $A,B,C\in M_{n}(K)$ such that $[A,[[A,B],[A,C]]]=\sum_{i=1}^{n}a_{i,i}e_{i,i}+\sum_{i=1}^{n-1}a_{i,i+1}e_{i,i+1}$.
\end{lem}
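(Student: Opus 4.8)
The plan is to reduce the problem to understanding the image of the Lie word $w(A,B,C) = [A,[[A,B],[A,C]]]$ when $A$ is taken to be a fixed diagonal matrix with distinct eigenvalues, say $A = \mathrm{diag}(\lambda_1,\dots,\lambda_n)$ with the $\lambda_i$ pairwise distinct and (for convenience) summing to something nonzero or simply generic. For such an $A$, conjugation by $A$ acts on the matrix unit $e_{i,j}$ by the scalar $\lambda_i - \lambda_j$, so $[A,-]$ is diagonalizable on $M_n(K)$ with the off-diagonal matrix units as eigenvectors; this makes the triple bracket very explicit. Writing $B = \sum b_{i,j} e_{i,j}$ and $C = \sum c_{i,j} e_{i,j}$, one computes $[A,B] = \sum_{i \neq j}(\lambda_i - \lambda_j) b_{i,j} e_{i,j}$, similarly for $[A,C]$, then $[[A,B],[A,C]]$, and finally apply $[A,-]$ once more. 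The outer $[A,-]$ kills the diagonal part of $[[A,B],[A,C]]$, so at first sight one cannot produce the diagonal terms $\sum a_{i,i} e_{i,i}$ this way — which is exactly why the statement only asks for the sum of a diagonal matrix and a superdiagonal matrix, and why $A$ must be allowed to be non-diagonal.

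The key trick I would use is the standard one for hitting trace-zero diagonal matrices with iterated commutators: choose $A$ not diagonal but rather a companion-type or "shift-plus-diagonal" matrix, or equivalently first solve the problem with $A$ diagonal to obtain all matrices supported on the strict upper part, and then handle the diagonal part separately by a perturbation/limiting or a direct ad hoc construction. Concretely, I would first show: for $A = \mathrm{diag}(\lambda_1,\dots,\lambda_n)$ generic, the map $(B,C) \mapsto [A,[[A,B],[A,C]]]$ has image containing every matrix supported on the superdiagonal entries $(i,i+1)$; this is a bilinear-algebra computation — the $(i,i+1)$ entry of $w$ turns out to be $(\lambda_i-\lambda_{i+1})$ times an explicit bilinear form in the entries of $B$ and $C$ that one can independently solve, using the remaining free entries $b_{i,j}, c_{i,j}$ to zero out all other off-diagonal positions. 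Then, to produce the prescribed diagonal part $\sum a_{i,i} e_{i,i}$ (with $\sum a_{i,i} = 0$), I would add to $A$ a small nilpotent piece — e.g. replace $A$ by $A + \varepsilon N$ where $N$ is a suitable strictly-upper-triangular matrix — so that the outer bracket $[A + \varepsilon N, -]$ now has a nonzero diagonal component coming from $[\varepsilon N, (\text{strictly-lower part of } [[A,B],[A,C]])]$; tuning the entries of $B, C, N$ then lets one realize an arbitrary trace-zero diagonal matrix plus a superdiagonal correction, the latter being harmless since we have already shown the superdiagonal entries are fully controllable.

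The main obstacle I anticipate is the bookkeeping in the second step: after perturbing $A$, the triple bracket is no longer diagonalized by matrix units, and the diagonal output, the superdiagonal output, and the "garbage" in other positions are all coupled through the same parameters. The right way to manage this, I think, is to work order by order in $\varepsilon$: the $\varepsilon^0$ term reproduces the diagonal-$A$ analysis (superdiagonal controllable, diagonal zero), and the $\varepsilon^1$ term contributes precisely a diagonal matrix whose entries are differences of consecutive partial sums of an auxiliary vector — automatically trace zero — which can be made to match any prescribed $(a_{i,i})$ with $\sum a_{i,i}=0$ by solving a triangular linear system. One then sets $\varepsilon=1$ (characteristic zero makes all the denominators $\lambda_i - \lambda_j$ and the relevant integer coefficients invertible) and absorbs the resulting finite superdiagonal error back into the first step. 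Care is needed that the genericity conditions on $\lambda_1,\dots,\lambda_n$ and the solvability of the bilinear systems are simultaneously satisfiable; since $K$ has characteristic zero this is a non-vanishing-of-polynomials argument and should go through.
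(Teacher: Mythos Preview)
The paper does not prove this lemma; it is quoted from \cite[Lemma~6]{BW} and explicitly listed among the preliminary results presented ``without their proofs.'' So there is no argument in the present paper to compare your sketch against.

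On the sketch itself, the perturbative step has a genuine gap. You propose to solve at order $\varepsilon^{0}$ (diagonal $A_{0}$, controllable superdiagonal, zero diagonal output) and at order $\varepsilon^{1}$ (controllable trace-zero diagonal output), then ``set $\varepsilon=1$'' and ``absorb the resulting finite superdiagonal error back into the first step.'' But $[A,[[A,B],[A,C]]]$ is cubic in $A$, so writing $A=A_{0}+\varepsilon N$ produces contributions at orders $\varepsilon^{0},\varepsilon^{1},\varepsilon^{2},\varepsilon^{3}$, all present when $\varepsilon=1$; and ``absorbing'' the superdiagonal error means modifying $B,C$, which simultaneously changes the $\varepsilon^{1},\varepsilon^{2},\varepsilon^{3}$ contributions to the diagonal you have just tuned. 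The two adjustments are coupled, and your outline provides no mechanism (an iteration, an inverse-function argument, or a direct algebraic solve at $\varepsilon=1$) to close this loop. A second, independent gap is the order-$0$ claim itself: with $A$ diagonal you assert one can hit an arbitrary superdiagonal while forcing \emph{all} other off-diagonal entries of $w(A,B,C)$ to vanish. Since the target in the lemma has support only on the diagonal and superdiagonal, this vanishing is mandatory, and it is a nontrivial system of bilinear constraints that you have not verified.

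For orientation, the surrounding arguments in the paper (Lemma~\ref{l3} and the corrected proof of Lemma~\ref{l2}) do not work with a generic diagonal $A$ plus perturbation, but rather fix $A=\sum_{i=1}^{n-1}e_{i,i+1}$, the nilpotent shift; this is also the choice in \cite{BW}, and it confines all intermediate computations to a narrow band around the diagonal from the outset, so the ``kill the far-off-diagonal garbage'' problem never arises.
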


For the next lemma see \cite[Lemma 1.2]{AR}.

\begin{lem}\label{l5}
Let $K$ be any field and let $A\in M_{n}(K)$ be a diagonal matrix with pairwise different entries in the main diagonal. Then $[A,M_{n}(K)]$ is the set of matrices whose diagonals entries are all $0$.
\end{lem}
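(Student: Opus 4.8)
The plan is to establish both inclusions by a single entrywise computation of the commutator. Write $A=\mathrm{diag}(a_1,\dots,a_n)$, where by hypothesis $a_i\neq a_j$ for all $i\neq j$. For an arbitrary $B=(b_{ij})\in M_n(K)$, comparing the $(i,j)$ entries of $AB$ and $BA$ yields
\[
[A,B]_{ij}=(a_i-a_j)\,b_{ij}\qquad (1\le i,j\le n).
\]
Taking $i=j$ shows $[A,B]_{ii}=0$, so every matrix in $[A,M_n(K)]$ has all diagonal entries equal to $0$; this gives the inclusion $[A,M_n(K)]\subseteq\{C\in M_n(K):C_{ii}=0\text{ for all }i\}$.

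For the opposite inclusion I would start from an arbitrary $C=(c_{ij})\in M_n(K)$ with $c_{ii}=0$ for every $i$ and produce an explicit preimage under $\mathrm{ad}_A\colon B\mapsto[A,B]$. Since the diagonal entries of $A$ are pairwise distinct, $a_i-a_j\neq 0$ whenever $i\neq j$, so one may set $b_{ij}=c_{ij}/(a_i-a_j)$ for $i\neq j$ and $b_{ii}=0$; the displayed formula then gives $[A,B]=C$. Hence the space of matrices with zero diagonal is contained in $[A,M_n(K)]$, and the two inclusions together prove the lemma.

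The argument has no real obstacle: the only hypothesis that gets used is that $A$ has pairwise distinct diagonal entries, which is precisely what legitimizes dividing by $a_i-a_j$, and nothing about $K$ (in particular its characteristic) is needed. As a consistency check, one can note that $\ker(\mathrm{ad}_A)$ consists exactly of the diagonal matrices, so $\dim[A,M_n(K)]=n^2-n$, which matches the dimension of the space of zero-diagonal matrices.
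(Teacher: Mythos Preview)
Your proof is correct: the entrywise computation $[A,B]_{ij}=(a_i-a_j)b_{ij}$ immediately gives both inclusions, and the hypothesis of pairwise distinct diagonal entries is exactly what makes $a_i-a_j$ invertible for $i\neq j$. The paper does not actually supply its own proof of this lemma---it simply cites \cite[Lemma~1.2]{AR}---so there is nothing to compare; your argument is the standard elementary one and is entirely adequate here.
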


\begin{proof}[Proof of Theorem \ref{t1}]
We start the proof by reducing the polynomial $f$ to a proper one. This can be done by considering the degree three multilinear polynomials obtained from $f$ through the evaluation of some variable $x_{i}, i=1,\dots,4$, by $1$. In case one of these four polynomials is nonzero then we are able to use the Mesyan's result (see \cite[Theorem 13]{Mesyan}) to obtain the desired conclusion. Otherwise, since $char(K)=0$ then we have $f$ as a proper polynomial (see for instance \cite[Exercise 4.3.6]{Drensky}). 

Hence we may write $f$ as 
\begin{eqnarray}\nonumber
f(x_{1},x_{2},x_{3},x_{4})=L(x_{1},x_{2},x_{3},x_{4})+\alpha_{1}[x_{1},x_{2}][x_{3},x_{4}]+\alpha_{2}[x_{1},x_{3}][x_{2},x_{4}]\\\nonumber 
+\alpha_{3}[x_{1},x_{4}][x_{2},x_{3}]
+\alpha_{4}[x_{2},x_{3}][x_{1},x_{4}]
+\alpha_{5}[x_{2},x_{4}][x_{1},x_{3}]+\alpha_{6}[x_{3},x_{4}][x_{1},x_{2}]
\end{eqnarray}
where $\alpha_{1},\dots,\alpha_{6}\in K$.
Using a Hall basis for the multilinear Lie polynomials of degree four (see \cite[section 2.3]{Bahturin}), we may write the Lie polynomial $L$ as 
\begin{align*}
L(x_{1},x_{2},x_{3},x_{4})= & \beta_{1}[[[x_{2},x_{1}],x_{3}],x_{4}]+\beta_{2}[[[x_{3},x_{1}],x_{2}],x_{4}]+\beta_{3}[[[x_{4},x_{1}],x_{2}],x_{3}]\\
+ & \beta_{4}[[x_{4},x_{1}],[x_{3},x_{2}]]+\beta_{5}[[x_{4},x_{2}],[x_{3},x_{1}]]+\beta_{6}[[x_{4},x_{3}],[x_{2},x_{1}]]
\end{align*}
for some scalars $\beta_{1},\dots,\beta_{6}\in K$. Since the three last terms of the Lie polynomial $L$ can be written as a linear combination of product of two commutators we may write $f$ as
\begin{align*} 
f(x_{1},x_{2},x_{3},x_{4})= & \beta_{1}[[[x_{2},x_{1}],x_{3}],x_{4}]+\beta_{2}[[[x_{3},x_{1}],x_{2}],x_{4}]+\beta_{3}[[[x_{4},x_{1}],x_{2}],x_{3}]\\
+ & \alpha_{1}[x_{1},x_{2}][x_{3},x_{4}]+\alpha_{2}[x_{1},x_{3}][x_{2},x_{4}]+\alpha_{3}[x_{1},x_{4}][x_{2},x_{3}]\\ 
+ & \alpha_{4}[x_{2},x_{3}][x_{1},x_{4}]+\alpha_{5}[x_{2},x_{4}][x_{1},x_{3}]+\alpha_{6}[x_{3},x_{4}][x_{1},x_{2}]
\end{align*}

If some among the scalars $\beta_{1},\beta_{2},\beta_{3}$ is nonzero we claim that $sl_{n}(K)\subset f(M_{n}(K))$. Indeed, say $\beta_{1}\neq0$ without loss of generality. Take $x_{1}=x_{3}=x_{4}=D$ as a diagonal matrix with pairwise distinct diagonal entries. Since any matrix in $M_n(K)$ is the sum of a diagonal matrix and a matrix with zeros in the main diagonal, Lemma \ref{l5} implies that $f(D,x_{2},D,D)$ consists of all matrices with zeros in the main diagonal. On the other hand, Lemma \ref{lemasimiliar} states that traceless matrices are conjugate to those matrices with zero diagonal. Then by Proposition \ref{properties} (1) the claim is proved. The cases where $\beta_{2}\neq0$ and $\beta_{3}\neq0$ can be handled similarly. 

From now on we may assume $\beta_{1}=\beta_{2}=\beta_{3}=0$ and then we consider the two following cases.
\\

{\bf Case 1:} $\alpha_{1}=\alpha_{4}=\alpha_{6}=\alpha_{3}=-\alpha_{2}=-\alpha_{5}$. 
\\

The above assumptions on the coefficients of $f$ lead us to $f=\lambda St_{4}$ where $\lambda\in K\setminus\{0\}$. Using the identity $[uv,w]=[u,w]v+u[v,w]$ we have
\begin{eqnarray}\nonumber
St_{4}(A,A^{2},B,C)=[A,A^{2}][B,C]+[B,C][A,A^{2}]+[A^{2},B][A,C]+[A,C][A^{2},B]\\\nonumber 
-[A,B][A^{2},C]-[A^{2},C][A,B]\\\nonumber 
=[A^{2},B][A,C]+[A,C][A^{2},B]-[A,B][A^{2},C]-[A^{2},C][A,B]\\\nonumber 
=[A,[[A,B],[A,C]]]
\end{eqnarray}
Now is enough to apply Lemma \ref{l4} for the Jordan normal form of a traceless matrix.
\\

{\bf Case 2:} {\it at least one among the equalities} $\alpha_{1}=\alpha_{4}=\alpha_{6}=\alpha_{3}=-\alpha_{2}=-\alpha_{5}$ {\it does not hold.} 
\\

In this case one may check that there exist matrices $A,B,C \in M_{n}(K)$ such that at least one of the following is a nonzero matrix
\begin{eqnarray}\nonumber
f(A,A,B,C)=(\alpha_{2}+\alpha_{4})[A,B][A,C]+(\alpha_{3}+\alpha_{5})[A,C][A,B]\\\nonumber
f(A,B,A,C)=(\alpha_{1}-\alpha_{4})[A,B][A,C]+(\alpha_{6}-\alpha_{3})[A,C][A,B]\\\nonumber
f(A,B,C,A)=(-\alpha_{1}-\alpha_{5})[A,B][A,C]+(-\alpha_{2}-\alpha_{6})[A,C][A,B]\\\nonumber
f(B,A,A,C)=(-\alpha_{1}-\alpha_{2})[A,B][A,C]+(-\alpha_{5}-\alpha_{6})[A,C][A,B]\\\nonumber
f(B,A,C,A)=(-\alpha_{3}+\alpha_{1})[A,B][A,C]+(\alpha_{6}-\alpha_{4})[A,C][A,B]\\\nonumber
f(B,C,A,A)=(\alpha_{2}+\alpha_{3})[A,B][A,C]+(\alpha_{4}+\alpha_{5})[A,C][A,B]
\end{eqnarray}
Hence it is enough to study the image of the  polynomial 
\begin{eqnarray}\label{ex}
f=[x_{1},x_{2}][x_{1},x_{3}]+\lambda[x_{1},x_{3}][x_{1},x_{2}]
\end{eqnarray}
 on $M_{n}(K)$ where $\lambda\in K$. This will be done in the next two lemmas.
\end{proof}

\begin{lem}\label{l1}
 Let $K$ be an algebraically closed field of characteristic zero and let $n\geq3$. Then each $D\in sl_{n}(K)$ can be written as $D=[[A,B],[A,C]]$ for a suitable choice of matrices $A,B,C\in M_{n}(K)$.
\end{lem}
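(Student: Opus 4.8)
The plan is to reduce the problem to a normal form for $D$ and then to solve a system of matrix equations explicitly. Since $K$ is algebraically closed, every traceless matrix $D$ is conjugate to a matrix in Jordan normal form, and conjugating $D$ by $P$ replaces $(A,B,C)$ by $(PAP^{-1},PBP^{-1},PCP^{-1})$, so it suffices to realize one representative in each conjugacy class of $sl_n(K)$. I would first handle the generic case in which $D$ is conjugate to a matrix with pairwise distinct eigenvalues (hence diagonalizable), and then treat the degenerate cases separately.

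In the diagonalizable case, write $D=\mathrm{diag}(d_1,\dots,d_n)$ with $\sum d_i=0$. The idea is to look for $A$ also diagonal, say $A=\mathrm{diag}(a_1,\dots,a_n)$ with the $a_i$ pairwise distinct; then for any matrices $B,C$ we have $([A,B])_{ij}=(a_i-a_j)B_{ij}$, so $[A,B]$ and $[A,C]$ are arbitrary matrices with zero diagonal (by Lemma \ref{l5}), and we are reduced to the question: which diagonal matrices arise as the diagonal part of a product $XY$ where $X,Y$ range over matrices with zero diagonal? One checks $(XY)_{ii}=\sum_{j\neq i}X_{ij}Y_{ji}$, and here I would make the explicit ansatz supported on a single cyclic pattern — e.g. $X_{i,i+1}=x_i$, $Y_{i+1,i}=y_i$ (indices mod $n$), all other entries zero — which gives $(XY)_{ii}=x_iy_{i-1}$ and $(XY)_{i+1,i+1}$ contributions that must be arranged to vanish off the diagonal; a short computation shows $[X,Y]$ can be made an arbitrary traceless diagonal matrix with this kind of ansatz when $n\geq 3$ (the constraint $\sum d_i=0$ being exactly the obstruction, which matches the hypothesis $D\in sl_n(K)$). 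Thus $D=[[A,B],[A,C]]$ with $[A,B]=X$, $[A,C]=Y$, solving for $B,C$ by dividing entrywise by $a_i-a_j$.

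The main obstacle is the non-diagonalizable case, where $D$ has repeated eigenvalues and nontrivial Jordan blocks; then one cannot take $A$ diagonal with distinct entries and simultaneously expect $[A,\cdot]$ to hit $D$'s Jordan structure cleanly, and the ``zero-diagonal'' description of $[A,B]$ is no longer available in such a convenient form. Here I would argue by a density/specialization argument or by direct block-by-block construction: the set of $D$ expressible as $[[A,B],[A,C]]$ is closed under conjugation and one can try to show it contains a Zariski-dense subset of $sl_n(K)$ (namely the regular semisimple locus handled above) and is itself the image of a morphism, but images of morphisms need not be closed, so density alone is not enough — instead I expect the correct route is an explicit construction handling each Jordan type, possibly reducing a single large Jordan block to a small-size base case and padding with the diagonalizable construction on a complementary block, using that $n\geq 3$ gives enough room. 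This case analysis, making sure the trace-zero constraint is the only one and that nilpotent $D$ (the hardest subcase, $D$ a single nilpotent Jordan block) is covered, is where the real work lies; I would expect the authors to exhibit explicit $A,B,C$ for the nilpotent block and then combine blocks.
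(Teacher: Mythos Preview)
The paper does not actually supply a proof of Lemma~\ref{l1}: the lemma is stated and used (it disposes of the case $\lambda=-1$ in~(\ref{ex})), but its proof is left to the original source \cite{BW}, whose error lay only in Lemma~\ref{l2}. So there is no argument in the paper to compare your proposal against directly.

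That said, the surrounding material makes the intended method visible, and it is different from yours. In Lemmas~\ref{l3} and~\ref{l4} and throughout the proof of Lemma~\ref{l2}, the matrix $A$ is taken to be the nilpotent shift $A=\sum_{i=1}^{n-1}e_{i,i+1}$, not a diagonal matrix; Lemma~\ref{l3} then guarantees that $[A,B]$ can be made equal to any prescribed trace-zero matrix supported on the diagonal and first superdiagonal. With $D$ already in Jordan form, this ansatz reaches the diagonal and the superdiagonal of $D$ in one stroke, so the non-diagonalizable case is not an afterthought but precisely what the construction is built for.

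Your route via a diagonal $A$ with pairwise distinct entries is clean for diagonalizable $D$, and your cyclic computation giving $[X,Y]_{ii}=x_iy_i-x_{i-1}y_{i-1}$ does work. But, as you correctly diagnose, a diagonal $A$ forces $[A,B]$ and $[A,C]$ to have zero diagonal, and their commutator then cannot produce the superdiagonal $1$'s of a Jordan block; the density argument you float is, as you also note, insufficient because images of polynomial maps need not be Zariski-closed. The remedy is not a block-by-block patch on top of the diagonal construction but a change of ansatz at the outset: take $A$ nilpotent and use Lemma~\ref{l3}, which is exactly the mechanism \cite{BW} and the present paper rely on for the companion Lemmas~\ref{l4} and~\ref{l2}.
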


We note that Lemma \ref{l1} 
completely solve the case where $\lambda=-1$ in (\ref{ex}). The next lemma deals with the others values for $\lambda$.

\begin{lem}\label{l2}
Let $K$ be an algebraically closed field of characteristic zero, let  $n\geq 3$ and let $\lambda \in K\setminus\{-1\}$. Then each $D\in sl_{n}(K)$ can be written as $D=[A,B][A,C]+\lambda[A,C][A,B]$ for a suitable choice of matrices $A,B,C \in M_{n}(K)$.
\end{lem}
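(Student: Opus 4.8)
The plan is to combine the conjugation-invariance of $f(M_n(K))$ with a reduction to a Sylvester-type linear equation. First, since $f(M_n(K))$ is closed under conjugation (Proposition \ref{properties}(1)) and, by Lemma \ref{lemasimiliar}, every traceless matrix is similar to one with zero diagonal, it suffices to realize each \emph{zero-diagonal} $D$ in the form $[A,B][A,C]+\lambda[A,C][A,B]$. I would take $A=\mathrm{diag}(a_1,\dots,a_n)$ with the $a_i$ pairwise distinct (available since $\operatorname{char}(K)=0$ and $K$ is infinite), so that by Lemma \ref{l5} the assignments $B\mapsto[A,B]$ and $C\mapsto[A,C]$ each surject onto the space of zero-diagonal matrices, independently of one another. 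This reduces the statement to the purely matrix-theoretic claim: every zero-diagonal $D$ equals $XY+\lambda YX$ for some zero-diagonal matrices $X,Y$.

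To produce such $X,Y$, I would fix $X$ to be a conveniently chosen \emph{invertible} zero-diagonal matrix --- for instance the companion matrix of a polynomial $t^n+c_{n-2}t^{n-2}+\cdots+c_0$ with $c_0\neq 0$ and pairwise distinct roots $\mu_1,\dots,\mu_n$ (note $\sum\mu_i=0$, so the diagonal of this companion matrix vanishes) --- and then solve for $Y$. The key point is that the $K$-linear operator $\mathcal{L}_X\colon Z\mapsto XZ+\lambda ZX$ on $M_n(K)$ acts, in the eigenbasis of $X$, diagonally on the matrix units with eigenvalue $\mu_i+\lambda\mu_j$; hence $\mathcal{L}_X$ is invertible provided $\mu_i\neq-\lambda\mu_j$ for all $i,j$. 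For $i=j$ this is exactly the hypothesis $\lambda\neq-1$, and for $i\neq j$ it holds once the spectrum of $X$ is chosen to avoid the finitely many forbidden ratios. Then $Y:=\mathcal{L}_X^{-1}(D)$ is the unique matrix with $XY+\lambda YX=D$.

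The main obstacle is that this solution $Y$ must \emph{also} be zero-diagonal --- equivalently, $Y$ must lie in the image of $\operatorname{ad}_A$ --- and for a fixed $X$ this amounts to $n$ linear conditions on $D$ which are not automatic. I would overcome this by exploiting the slack still available: the freedom to conjugate $D$ by an arbitrary invertible matrix before applying Lemma \ref{lemasimiliar}, together with the freedom in the choice of $X$; equivalently, one does not insist that $A$ be diagonal but only \emph{regular}, and arranges $[A,M_n(K)]=K[A]^{\perp}$ to contain both $X$ and the solution $Y$. Concretely, for a suitable $X$ one shows that every traceless conjugacy class meets the linear subspace $W_X\subseteq sl_n(K)$ of those $D$ for which $\mathcal{L}_X^{-1}(D)$ is zero-diagonal; carrying out this dimension count over the space of conjugacy classes is the heart of the matter, and is where $n\geq 3$, $\operatorname{char}(K)=0$ and $K=\overline{K}$ are genuinely used. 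The degenerate values $\lambda=0$ (giving $D=[A,B][A,C]$, a product of two commutator matrices through the common slot $A$) and $\lambda=1$ (the anticommutator $[A,B]\circ[A,C]$) serve as natural checkpoints along the way.
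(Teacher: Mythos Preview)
Your reduction is sound up through the point you yourself flag as the obstacle. With $A$ diagonal regular, Lemma~\ref{l5} does reduce the problem to writing a zero-diagonal $D$ as $XY+\lambda YX$ with $X,Y$ zero-diagonal; fixing an invertible zero-diagonal $X$ with spectrum $\{\mu_i\}$ avoiding $\mu_i=-\lambda\mu_j$ does make $\mathcal{L}_X$ bijective on $M_n(K)$; and the unique preimage $Y=\mathcal{L}_X^{-1}(D)$ then solves $XY+\lambda YX=D$. But, as you say, nothing forces this $Y$ to be zero-diagonal, and that is exactly the condition needed to write $Y=[A,C]$.

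The gap is that your proposed fix is not carried out. You assert that ``for a suitable $X$ one shows that every traceless conjugacy class meets $W_X$'' via a dimension count, but no such count is given, and a na\"{\i}ve one is inconclusive: the subspace $\mathcal{L}_X(\{\text{zero-diagonal}\})\cap\{\text{zero-diagonal}\}$ has codimension up to $2n-1$ in $sl_n(K)$, while some traceless conjugacy classes (for instance that of a rank-one nilpotent) have dimension exactly $2n-1$, so the expected dimension of the intersection is $0$ and nonemptiness does not follow from dimensions alone. The alternative reformulation --- take $A$ merely regular and ask that $X,Y\in K[A]^{\perp}$ --- is just a conjugate of the same problem (diagonalize $A$) and supplies no new leverage. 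As written, the proposal stops precisely where the actual work of the lemma begins.

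For comparison, the paper sidesteps this difficulty entirely by choosing a \emph{nilpotent} $A=\sum_{i=1}^{n-1}e_{i,i+1}$ rather than a semisimple one, and by putting $D$ in Jordan normal form rather than merely zero-diagonal form. Lemma~\ref{l3} then lets one prescribe $[A,B]$ as an arbitrary trace-zero diagonal matrix and $[A,C]$ as an arbitrary trace-zero diagonal plus superdiagonal; the product $[A,B][A,C]+\lambda[A,C][A,B]$ is itself diagonal-plus-superdiagonal, and matching it against the Jordan form of $D$ yields the explicit scalar system
\[
(1+\lambda)a_{ii}b_{ii}=d_{ii},\qquad (a_{ii}+\lambda a_{i+1,i+1})b_{i,i+1}=d_{i,i+1},
\]
subject to $\sum a_{ii}=\sum b_{ii}=0$, which is then solved by a direct case analysis on the number of Jordan blocks of $D$. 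The paper's route is computational, but it closes every case; yours is conceptually cleaner but leaves the decisive step unproved.
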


We first note that the proof of Lemma \ref{l2} presented in \cite{BW} is not correct, since it was claimed that given scalars $d_{1},\dots,d_{n}\in K$ satisfying $\sum_{i=1}^{n}d_{i}=0$ the following system of equations
\begin{eqnarray}\nonumber
\left\{\begin{array}{lcc}
(\lambda+1)y_{1}&=&d_{1}\\
(\lambda+1)y_{2}&=&d_{2}\\
&\vdots &\\
(\lambda+1)y_{n-1}&=&d_{n-1}\\
-(n-1)(\lambda+1)y_{n}&=&d_{n}
\end{array}\right.
\end{eqnarray} 
has a solution $y_{i}=b_{i}, i=1,\dots,n$ satisfying $\sum_{i=1}^{n}b_{i}=0$. However the existence of such solution would gives us $-n(\lambda+1)b_{n}=0$ by summing all equations above. Hence $b_{n}=0$ which implies $d_{n}=0$, a contradiction with the generality of the chosen $d_{n}$.

In next we will present a correction of the proof of Lemma \ref{l2}. We recall the following lemma from \cite{BW} which will be used in our proof.

\begin{lem}\label{l3}
Let $K$ be a field, let $a_{i,j}\in K$ such that $\sum_{i=1}^{n}a_{i,i}=0$ and let $A=\sum_{i=1}^{n-1}e_{i,i+1}\in M_{n}(K)$. Then there exists $B\in M_{n}(K)$ such that 
\begin{eqnarray}\nonumber
[A,B]=\sum_{i=1}^{n}a_{i,i}e_{i,i}+\sum_{i=1}^{n-1}a_{i,i+1}e_{i,i+1}.
\end{eqnarray} 
\end{lem}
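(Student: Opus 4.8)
The plan is to solve the linear equation $[A,B]=M$ explicitly, where $M=\sum_{i=1}^{n}a_{i,i}e_{i,i}+\sum_{i=1}^{n-1}a_{i,i+1}e_{i,i+1}$ is the prescribed right-hand side, by exploiting the rigid structure of the map $X\mapsto[A,X]$ for the regular nilpotent $A=\sum_{i=1}^{n-1}e_{i,i+1}$. First I would record the entrywise identity $[A,B]_{i,j}=B_{i+1,j}-B_{i,j-1}$, valid for all $i,j$ under the conventions $B_{n+1,j}=0$ and $B_{i,0}=0$. The crucial observation is that both terms on the right lie on the $(j-i-1)$-th diagonal of $B$, while $[A,B]_{i,j}$ lies on the $(j-i)$-th diagonal; thus $X\mapsto[A,X]$ carries the $d$-th diagonal of $B$ into the $(d+1)$-th diagonal of the output, and the equation $[A,B]=M$ decouples completely into independent telescoping problems, one per diagonal.

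The second step is to solve these one-dimensional problems. Along the strictly upper diagonals no boundary term appears, so each equation $B_{i+1,i+d}-B_{i,i+d-1}=(\text{target entry})$ is freely solvable; I would match the superdiagonal of $M$ by taking the main diagonal of $B$ to be the partial sums $B_{i,i}=\sum_{k=1}^{i-1}a_{k,k+1}$, and set every diagonal of $B$ strictly above the main one equal to $0$ so that the corresponding output diagonals vanish. The only compatibility condition arises on the main diagonal of the output: summing $a_{i,i}=B_{i+1,i}-B_{i,i-1}$ over $i=1,\dots,n$ telescopes to $B_{n+1,n}-B_{1,0}=0$, which forces $\sum_{i=1}^{n}a_{i,i}=0$; this is exactly the hypothesis. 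Granting it, I set the subdiagonal of $B$ to be the partial sums $B_{i+1,i}=\sum_{k=1}^{i}a_{k,k}$ and all remaining entries of $B$ equal to $0$. A direct check against $[A,B]_{i,j}=B_{i+1,j}-B_{i,j-1}$ then confirms that $[A,B]$ carries $a_{i,i}$ on its main diagonal, $a_{i,i+1}$ on its superdiagonal, and $0$ everywhere else, as required.

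I expect the only delicate point to be the bookkeeping at the first and last rows and columns, for it is precisely there that the trace-zero hypothesis surfaces as the compatibility condition for the main diagonal, whereas away from the boundary every equation is an unconstrained telescoping recursion. As an invariant confirmation I would also argue through the trace form $\langle X,Y\rangle=\mathrm{tr}(XY)$: since $A$ is a regular nilpotent, its centralizer is $C(A)=\mathrm{span}\{I,A,\dots,A^{n-1}\}$, and the relation $\mathrm{tr}([A,X]Y)=-\mathrm{tr}(X[A,Y])$ identifies the image of $X\mapsto[A,X]$ with the trace-orthogonal complement of $C(A)$, namely $\{X:\mathrm{tr}(XA^{k})=0,\ k=0,\dots,n-1\}$. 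A one-line computation gives $\mathrm{tr}(XA^{k})=\sum_{q}X_{q+k,q}$, the sum of the entries on the $k$-th subdiagonal of $X$; for our $M$ this equals the trace when $k=0$ (zero by hypothesis) and vanishes for each $k\ge 1$ because the subdiagonals of $M$ are zero, so $M$ lies in the image and the required $B$ exists, in agreement with the explicit construction above.
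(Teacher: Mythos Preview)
Your argument is correct. The entrywise identity $[A,B]_{i,j}=B_{i+1,j}-B_{i,j-1}$ is verified exactly as you say, the diagonal-shift observation cleanly decouples the problem, and your explicit choices $B_{i,i}=\sum_{k=1}^{i-1}a_{k,k+1}$ on the main diagonal and $B_{i+1,i}=\sum_{k=1}^{i}a_{k,k}$ on the subdiagonal (with all other entries of $B$ equal to zero) do the job; the only compatibility constraint is the trace-zero hypothesis, precisely where you locate it. The supplementary argument via the trace form and the centralizer $C(A)=K[A]$ is also sound and gives a pleasant conceptual explanation of why the trace condition is the sole obstruction.

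As for comparison: the paper does not supply its own proof of this lemma. It is stated there without proof and attributed to Buzinski and Winstanley \cite{BW}, so there is nothing in the paper to compare against beyond the bare statement. Your explicit partial-sum construction is essentially the standard way to solve such a telescoping system, and the invariant-theoretic paragraph you add (identifying $\mathrm{Im}(\mathrm{ad}_A)$ with $C(A)^{\perp}$ under the trace pairing) goes beyond what is needed but is a nice independent confirmation.
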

 
\begin{proof}[Proof of Lemma \ref{l2}]
We start noting that we may assume $D$ is in  its Jordan normal form since $K$ is an algebraically closed field and the image of the polynomial 
\begin{eqnarray}\nonumber
f(x_{1},x_{2},x_{3})=[x_{1},x_{2}][x_{1},x_{3}]+\lambda[x_{1},x_{3}][x_{1},x_{2}]
\end{eqnarray}
is closed under conjugation by invertible elements of $M_{n}(K)$. 

So we write $D$ as
\begin{equation}\label{e1}
D=\sum_{i=1}^{n}d_{ii}e_{ii}+\sum_{i=1}^{n-1}d_{i,i+1}e_{i,i+1}
\end{equation}
where $d_{ii},d_{i,i+1}\in K$.
Take $A=\displaystyle\sum_{i=1}^{n-1}e_{i,i+1}$ and given any $a_{ii},b_{ii}\in K$, $i=1,\dots,n$ and $b_{i,i+1}\in K$, $i=1,\dots,n-1$ such that 
\begin{eqnarray}\nonumber
\sum_{i=1}^{n}a_{ii}=0=\sum_{i=1}^{n}b_{ii},
\end{eqnarray}
the Lemma \ref{l3} gives us the existence of matrices $B,C\in M_{n}(K)$ where $$[A,B]=\sum_{i=1}^{n}a_{ii}e_{ii} \ \mbox{and} \ [A,C]=\sum_{i=1}^{n}b_{ii}e_{ii}+\sum_{i=1}^{n-1}b_{i,i+1}e_{i,i+1}.$$

Therefore, 

\begin{equation} \label{e2}
[A,B][A,C]+\lambda[A,C][A,B]=(1+\lambda)\sum_{i=1}^{n}a_{ii}b_{ii}e_{ii}+\sum_{i=1}^{n-1}(a_{ii}+\lambda a_{i+1,i+1})b_{i,i+1}e_{i,i+1}
\end{equation}
and we are looking for a simultaneous solution of the two systems below given by comparing the equations (\ref{e1}) and (\ref{e2})

\begin{equation}\label{e3}
\left\{\begin{array}{c}
(1+\lambda)a_{11}b_{11}=d_{11}\\
\vdots\\
(1+\lambda)a_{nn}b_{nn}=d_{nn}\\
\end{array}\right.
\end{equation}

and 

\begin{equation}\label{e4}
\left\{\begin{array}{c}
(a_{11}+\lambda a_{22})b_{12}=d_{12}\\
\vdots\\
(a_{n-1,n-1}+\lambda a_{nn})b_{n-1,n}=d_{n-1,n}\\
\end{array}\right.
\end{equation}

jointly with the conditions $\displaystyle\sum_{i=1}^{n}a_{ii}=0=\sum_{i=1}^{n}b_{ii}$.

From now on we will divide our proof in the following three cases concerning about the number of Jordan blocks in the Jordan normal form of $D$.
\\

{\bf Case 1:} {\it $D$ has exactly one Jordan block.}
\\

In this case we must have $d_{ii}=0$ for all $i$, since $D$ is a traceless matrix and $char(K)=0$
.

If $\lambda\neq\displaystyle\frac{1}{n-1}$, one may check that is enough to choose $\displaystyle b_{11}=\cdots=b_{nn}=0, a_{11}=\cdots=a_{n-1,n-1}=1, a_{nn}=-(n-1), b_{12}=\frac{d_{12}}{1+\lambda}, b_{23}=\frac{d_{23}}{1+\lambda},\cdots,b_{n-2,n-1}=\frac{d_{n-2,n-1}}{1+\lambda}\ \mbox{and} \ b_{n-1,n}=\frac{d_{n-1,n}}{1+\lambda(1-n)}$.

If $\displaystyle\lambda=\frac{1}{n-1}\neq0$, then we choose  $\displaystyle b_{11}=\cdots=b_{nn}=0, a_{11}=\cdots=a_{n-2,n-2}=1,a_{n-1,n-1}=0, a_{nn}=-(n-2),b_{12}=\frac{d_{12}}{1+\lambda},\cdots,b_{n-3,n-2}=\frac{d_{n-3,n-2}}{1+\lambda},b_{n-2,n-1}=d_{n-2,n-1}\ \mbox{and}$ $\displaystyle b_{n-1,n}=\frac{d_{n-1,n}}{\lambda(2-n)}$, and we are done with the first case. 

Now we give a brief note about the notation before starting the next case. For simplicity we will write $a_{i}$ and $b_{i}$ instead of $a_{ii}$ and $b_{ii}$, respectively.

Assuming $a_{i}\neq0$, $i=1,\dots,n$, by the equations in (\ref{e3}) we have $(1+\lambda)a_{i}b_{i}=d_{ii}$ for $i=1,\dots,n$ and then $b_{i}=(1+\lambda)^{-1}a_{i}^{-1}d_{ii}$. Summing these equations for all $i$ we get that $\displaystyle(1+\lambda)^{-1}(\sum_{i=1}^{n}a_{i}^{-1}d_{ii})=0$, that is, $$\frac{d_{11}}{a_{1}}+\cdots+\frac{d_{nn}}{a_{n}}=0.$$
\\

{\bf Case 2:} {\it $D$ has exactly two Jordan blocks.}
\\

Suppose $D$ has one block of size $m_{1}$ and eigenvalue $d_{1}$ and another block of size $m_{2}$ and eigenvalue $d_{2}$. One between this two blocks must be of size at least two since $n\geq3$, and therefore we can take $m_{1}\geq 2$. Since $D$ has trace zero we have $d_{1}=0$ if and only if $d_{2}=0$, and then the previous case allow us to assume $d_{1}\neq0$. In this case we are looking for nonzero values for all $a_{i}$ such that $a_{i}+\lambda a_{i+1}$ is nonzero for all $i$. 
Note that this last condition can be used to compute the values of $b_{i,i+1}$ in (\ref{e4}) easily. 

Taking $a_{3}=\cdots=a_{n}=1$, we obtain

$$0=\frac{d_{1}}{a_{1}}+\cdots+\frac{d_{1}}{a_{m_{1}}}+\frac{d_{2}}{a_{m_{1}+1}}+\cdots+\frac{d_{2}}{a_{n}}= \frac{d_{1}}{a_{1}}+\frac{d_{1}}{-a_{1}-(n-2)}+(m_{1}-2)d_{1}-m_{1}d_{1.}$$ Therefore
$$\frac{1}{a_{1}}-\frac{1}{a_{1}+(n-2)}-2=0, \ \mbox{i.e.}, \ a_{1}+(n-2)-a_{1}-2a_{1}^2-2(n-2)a_{1}=0,$$ which lead us to the following equation
\begin{eqnarray}\label{e5}
2a_{1}^2+2(n-2)a_{1}-(n-2)=0
\end{eqnarray}
Since $n-2\neq0$, then $a_{1}\neq0$. We have also $a_{1}\neq-(n-2)$, otherwise $$2(n-2)^2+2(n-2)(-(n-2))-(n-2)=0, \ \mbox{that is}, \ n=2.$$

We note that the roots of the equation (\ref{e5}) in $a_{1}$ are $$a_{1}=\frac{-(n-2)\pm \sqrt{n(n-2)}}{2}$$
and so $$a_{2}=\frac{-(n-2)\mp\sqrt{n(n-2)}}{2},$$ provided that $a_{2}=-a_{1}-(n-2)$.

In computing $b_{i,i+1}$, the variables $b_{12}$ and $b_{23}$ depend on $a_{1}$ and $a_{2}$ by the equations $$(a_{1}+\lambda a_{2})b_{12}= d_{12} \ \mbox{and} \ (a_{2}+\lambda)b_{23}=d_{23}.$$

Denote $$\bar{a}_{1}=\displaystyle\frac{-(n-2)+ \sqrt{n(n-2)}}{2} \ \mbox{and} \ \displaystyle \bar{a}_{2}=\frac{-(n-2)-\sqrt{n(n-2)}}{2}.$$ 
For $\lambda\neq -\bar{a}_{1}/\bar{a}_{2}$ and $\lambda\neq -\bar{a}_{2}$, we take $a_{1}=\bar{a}_{1}$ and $a_{2}=\bar{a}_{2}$. For $\lambda\neq-\bar{a}_{1}/\bar{a}_{2}$ and $\lambda=-\bar{a}_{2}$, we take $a_{1}=\bar{a}_{2}$ and $a_{2}=\bar{a}_{1}$. So in the first equation we will have $\bar{a}_{2}-\bar{a}_{2}\bar{a}_{1}$ which is non-zero since $\bar{a}_{1}\neq 1$ and in the second equation we will have $\bar{a}_{1}-\bar{a}_{2}$ that is also non-zero. The last case is $\lambda=-\bar{a}_{1}/\bar{a}_{2}$. Again we take $a_{1}=\bar{a}_{2}$ and $a_{2}=\bar{a}_{1}$. Hence $\bar{a}_{2}+\lambda \bar{a}_{1}\neq0$, otherwise  $\bar{a}_{1}/\bar{a}_{2}=\bar{a}_{2}/\bar{a}_{1}$, i.e., $\bar{a}_{1}^2=\bar{a}_{2}^2$ which is an absurd. We also have $\bar{a}_{1}-\bar{a}_{1}/\bar{a}_{2}\neq0$, otherwise $\bar{a}_{2}=1$, another absurd.

Since all $\lambda \in K\setminus \{-1\}$ was considered above we finished the proof of the second case.
\\

{\bf Case 3:} {\it $D$ has $k\geq 3$ Jordan blocks.}
\\

Now suppose that $D$ is in the Jordan normal form with $k\geq3$ blocks of size $m_{k}$ each.
For the matrix $[A,B]=\displaystyle\sum_{i=1}^{n}a_{ii}e_{ii}$, we will consider the same block division that occurs in $D$ and in a same block we take all $a_{ii}$ equal to each other. For every $j\in\{1,\dots,k\}$, we will denote the element on the principal diagonal of the $j$-th block of $[A,B]$ by $a_{j}$.

We assume that $a_{j}\neq0$ for all $j\in\{1,\dots,k\}$. Since $(1+\lambda)a_{ii}b_{ii}=d_{ii}$, then $b_{ii}=(1+\lambda)^{-1}a_{ii}^{-1}d_{ii}$ and so $\displaystyle\sum_{i=1}^{n}a_{ii}^{-1}d_{ii}=0.$ Therefore,

$$0=\frac{m_{1}d_{1}}{a_{1}}+\cdots+\frac{m_{k-1}d_{k-1}}{a_{k-1}}+\frac{m_{k}d_{k}}{a_{k}}$$
$$=\frac{m_{1}d_{1}}{a_{1}}+\cdots+\frac{m_{k-1}d_{k-1}}{a_{k-1}}+ \frac{-\displaystyle\sum_{j=1}^{k-1}m_{j}d_{j}}{-\displaystyle\sum_{j=1}^{k-1}\frac{m_{j}}{m_{k}}a_{j}}$$
$$=\frac{m_{1}d_{1}}{a_{1}}+\cdots+\frac{m_{k-1}d_{k-1}}{a_{k-1}}+\frac{m_{k} \displaystyle\sum_{j=1}^{k-1}m_{j}d_{j}}{\displaystyle\sum_{j=1}^{k-1}m_{j}a_{j}}.$$

Taking $a_{1}=\cdots=a_{k-2}=1$, we obtain 

$$0=m_{1}d_{1}+\cdots+m_{k-2}d_{k-2}+\frac{m_{k-1}d_{k-1}}{a_{k-1}}+\frac{m_{k}\displaystyle \sum_{j=1}^{k-1}m_{j}d_{j}}{\displaystyle\sum_{j=1}^{k-2}m_{j}+m_{k-1}a_{k-1}},$$ and hence

$$0=a_{k-1}\bigg(\displaystyle\sum_{j=1}^{k-2}m_{j}+m_{k-1}a_{k-1}\bigg)\bigg(\sum_{j=1}^{k-2}m_{j}d_{j}\bigg) +m_{k-1}d_{k-1}\bigg(\sum_{j=1}^{k-2}m_{j}+m_{k-1}a_{k-1}\bigg)$$$$+a_{k-1}\bigg(m_{k}\sum_{j=1}^{k-1}m_{j}d_{j}\bigg)$$

$$ =m_{k-1}\bigg(\sum_{j=1}^{k-2}m_{j}d_{j}\bigg)a_{k-1}^2+\bigg(\bigg(\sum_{j=1}^{k-2}m_{j}\bigg)\bigg(\sum_{j=1}^{k-2}m_{j}d_{j}\bigg)+m_{k-1}^2d_{k-1}+m_{k}\sum_{j=1}^{k-1}m_{j}d_{j}\bigg)a_{k-1}$$ $$ + m_{k-1}d_{k-1}\sum_{j=1}^{k-2}m_{j}.$$ 

Denoting $d=\displaystyle\sum_{j=1}^{k-2}m_{j}d_{j}$ we obtain the follow quadratic equation in $a_{k-1}$:

\begin{equation} \label{e6}
m_{k-1}da_{k-1}^2 + \bigg(d\sum_{j=1}^{k-2}m_{j}+m_{k}d+m_{k-1}d_{k-1}(m_{k-1}+m_{k})\bigg)a_{k-1} +m_{k-1}d_{k-1}\sum_{j=1}^{k-2}m_{j}=0
\end{equation}

We want that $a_{k-1}\neq0$ e $a_{k}\neq0$, and since $a_{k}=\displaystyle \frac{-\sum_{j=1}^{k-2}m_{j}-m_{k-1}a_{k-1}}{m_{k}}$, then we are looking for non-zero solutions of (\ref{e6}) and both different from $\displaystyle -\frac{\sum_{j=1}^{k-2}m_{j}}{m_{k-1}}$. We divide this task in the three following subcases:
\\

{\bf Subcase 1:} $m_{k-1}d\neq0$ and $m_{k-1}d_{k-1}\displaystyle\sum_{j=1}^{k-2}m_{j}\neq0$.
\\

We have already two non-zero solutions in this case. Now we prove that at least one of them is different from $\displaystyle -\frac{\sum_{j=1}^{k-2}m_{j}}{m_{k-1}}$. Suppose, by contradiction, that the equation (\ref{e6}) has two repeated roots equal to $-\displaystyle\sum_{j=1}^{k-2}\frac{m_{j}}{m_{k-1}}$, i.e.,

\begin{equation}\label{e7}
\bigg(a_{k-1}+\sum_{j=1}^{k-2}\frac{m_{j}}{m_{k-1}}\bigg)^2=0.
\end{equation}

Hence $a_{k-1}=-\displaystyle\sum_{j=1}^{k-2}\frac{m_{j}}{m_{k-1}}$ and for other hand using the well known formula for the sum of the roots of a quadratic equation we also have $$a_{k-1}=-\displaystyle\frac{d\sum_{j=1}^{k-2}m_{j}+m_{k}d+m_{k-1}d_{k-1}(m_{k-1} +m_{k})}{2m_{k-1}d}$$ $$=-\sum_{j=1}^{k-2}\frac{m_{j}}{2m_{k-1}}-\frac{m_{k}}{2m_{k-1}}- \frac{d_{k-1}}{2d}(m_{k-1}+m_{k}).$$
Therefore, $$\frac{d_{k-1}}{2d}(m_{k-1}+m_{k})=-\frac{m_{k}}{2m_{k-1}}+\frac{1}{2}\sum_{j=1}^{k-2} \frac{m_{j}}{m_{k-1}},\mbox{i.e.,}$$ 

\begin{equation}\label{e8}
\frac{d_{k-1}}{d}=\frac{\sum_{j=1}^{k-2}m_{j}-m_{k}}{m_{k-1}(m_{k-1}+m_{k})}
\end{equation}

Dividing the equation (\ref{e6}) by $m_{k-1}d$ and comparing with the equation (\ref{e7}), we obtain

$$\left\{\begin{array}{l}
\displaystyle\sum_{j=1}^{k-2}\frac{m_{j}}{m_{k-1}}+\frac{m_{k}}{m_{k-1}}+\frac{d_{k-1}}{d}(m_{k-1}+m_{k})= 2\sum_{j=1}^{k-2}\frac{m_{j}}{m_{k-1}}\\
\displaystyle\frac{d_{k-1}}{d}\sum_{j=1}^{k-2}m_{j}=\frac{1}{m_{k-1}^2}\bigg(\sum_{j=1}^{k-2}m_{j}\bigg)^2.
\end{array}\right.$$

Since $\displaystyle\sum_{j=1}^{k-2}m_{j}\neq0$, then $\displaystyle\frac{d_{k-1}}{d}=\frac{1}{m_{k-1}^2}\bigg(\sum_{j=1}^{k-2}m_{j}\bigg)$ and so we get

$$2m_{k-1}\frac{d_{k-1}}{d}=\displaystyle\sum_{j=1}^{k-2}\frac{m_{j}}{m_{k-1}}+\frac{m_{k}}{m_{k-1}}+\frac{d_{k-1}}{d}(m_{k-1}+m_{k})$$

$$=\displaystyle\frac{n-m_{k-1}}{m_{k-1}}+\frac{d_{k-1}}{d}(m_{k-1}+m_{k}),$$ which implies in

$$\displaystyle\frac{d_{k-1}}{d}(m_{k-1}-m_{k})=\frac{n-m_{k-1}}{m_{k-1}}.$$

By the equation (\ref{e8}), we have

$$\displaystyle\frac{n-m_{k-1}}{m_{k-1}}=\frac{\sum_{j=1}^{k-2}m_{j}-m_{k}}{m_{k-1}(m_{k-1}+m_{k})} (m_{k-1}-m_{k}), \ \mbox{that is},$$

$$\displaystyle n-m_{k-1}=\frac{n-m_{k-1}-2m_{k}}{m_{k-1}+m_{k}}(m_{k-1}-m_{k}).$$

Therefore we have
\begin{eqnarray}\nonumber
(n-m_{k-1})(m_{k-1}+m_{k})=(n-m_{k-1}-2m_{k})(m_{k-1}-m_{k}),
\end{eqnarray}
and opening the brackets we obtain
\begin{eqnarray}\nonumber
nm_{k-1}+nm_{k}-m_{k-1}^2-m_{k-1}m_{k}\\ \nonumber 
=nm_{k-1}-nm_{k}-m_{k-1}^2+m_{k-1}m_{k}-2m_{k-1}m_{k}+2m_{k}^2
\end{eqnarray}
which implies in
$$m_{k}(n-m_{k})=0,\ \mbox{which means} \ m_{k}=0 \ \mbox{ou} \ n=m_{k},$$
and in the both cases we get a contradiction.

We conclude that there exists a non-zero root of (\ref{e6}) different from $\displaystyle -\sum_{j=1}^{k-2}\frac{m_{j}}{m_{k-1}}$.

{\bf Subcase 2:} $m_{k-1}d\neq0$ and $\displaystyle m_{k-1}d_{k-1}\sum_{j=1}^{k-2}m_{j}=0$.

Since $m_{k-1}$ and $\displaystyle\sum_{j=1}^{k-2}m_{j}$ are non-zero, then $d_{k-1}=0$. So the equation (\ref{e6}) can be rewritten as

$$m_{k-1}da_{k-1}^2+d\bigg(\sum_{j=1}^{k-2}m_{j}+m_{k}\bigg)a_{k-1}=0.$$

A solution of the equation above is $\displaystyle a_{k-1}=-\frac{\sum_{j=1}^{k-2}m_{j}+m_{k}}{m_{k-1}}\neq0$ which is also different from $\displaystyle -\sum_{j=1}^{k-2}\frac{m_{j}}{m_{k-1}}$, otherwise we would have $m_{k}=0$, a contradiction. 
\\

{\bf Subcase 3:} $m_{k-1}d=0$.

In this last case we have $d=0$ and so the equation (\ref{e6}) turns into 
$$\displaystyle d_{k-1}\bigg((m_{k-1}+m_{k})a_{k-1}+\sum_{j=1}^{k-2}m_{j}\bigg)=0.$$

If $d_{k-1}=0$, then any element of $K$ is solution and therefore we choose the appropriate one.

If $d_{k-1}\neq0$, then $\displaystyle a_{k-1}=-\frac{\sum_{j=1}^{k-2}m_{j}}{m_{k-1}+m_{k}}$. Provided that $k\geq 3$ we have $a_{k-1}\neq0$ and since $m_{k}\neq0$ we obtain $a_{k-1}\neq \displaystyle -\sum_{j=1}^{k-2}\frac{m_{j}}{m_{k-1}}$.

Now is enough to determine the values for $b_{i,i+1}$ in the system (\ref{e4}).

In the matrix $D$, from the end of the block that contains the element $d_{ii}$ to the beginning of the one containing $d_{i+1,i+1}$ we have $d_{i,i+1}=0$ and then we can take $b_{i,i+1}=0$. For the other elements above the principal diagonal we have $(a_{ll}+\lambda a_{l+1,l+1})b_{l,l+1}=d_{l,l+1}$ and $a_{ll}=a_{l+1,l+1}$. So we can take $b_{l,l+1}=a_{ll}^{-1}(1+\lambda)^{-1}d_{l,l+1}$. 

This finish the proof of the third and last case and therefore we conclude the proof of the lemma.
\end{proof}

\section*{Funding}

P. S. Fagundes was supported by São Paulo Research Foundation (FAPESP), grants \#2016/09496-7 and 	
\#2019/16994-1. T. C. de Mello was supported by São Paulo Research Foundation (FAPESP), grant \#2018/23690-6. P. H. S. dos Santos was financed in part by the Coordenação de Aperfeiçoamento de Pessoal de Nível Superior - Brasil (CAPES) - Finance Code 001.

\end{document}